
\documentclass{article}

\usepackage{amsmath,amssymb,amsthm,setspace,dsfont,hyperref,tikz,amscd}
\usetikzlibrary{matrix,arrows,decorations.pathmorphing}

\usepackage{kpfonts}
\usepackage[T1]{fontenc}
\hypersetup{
    colorlinks,
    citecolor=black,
    filecolor=black,
    linkcolor=black,
    urlcolor=black
}

\renewcommand{\thispagestyle}[1]{} 

\usepackage{fancyhdr} \usepackage{lastpage}

 \oddsidemargin .375 in \textwidth 5.75 in


\newtheorem{theorem}{Theorem}[section]
\newtheorem{corollary}[theorem]{Corollary}
\newtheorem{lemma}[theorem]{Lemma}
\newtheorem{proposition}[theorem]{Proposition}

\theoremstyle{definition}
 
\theoremstyle{remark}
\newtheorem{remark}[theorem]{Remark} 

\theoremstyle{example}
 

\numberwithin{equation}{section}

\newcommand*\mcapinn[2]{\vcenter{\hbox{$\mathsurround=0pt
  \ifx\displaystyle#1\textstyle\else#1\fi\bigcap$}}}

\newcommand*{\medcap}{\mathbin{\scalebox{1.7}{\ensuremath{\cap}}}}

\newcommand*{\medprod}{\mathbin{\scalebox{1.2}{\ensuremath{\prod}}}}
\newcommand*{\medsum}{\mathbin{\scalebox{1.2}{\ensuremath{\sum}}}}

\newcommand{\comment}[1]{}

\newcommand{\R}{\mathbb R}

\newcommand{\EE}{\mathbb{E}}

\newcommand{\eps}{\varepsilon}

\newcommand{\ls}{\leqslant}
\newcommand{\gr}{\geqslant}

\newcommand{\conv}{{\rm conv}}

\providecommand{\Prob}[1]{\mathbb{P}\left(#1\right)}

\providecommand{\abs}[1]{\lvert#1\rvert}
\providecommand{\norm}[1]{\lVert#1\rVert}

\providecommand{\vol}[1]{\left\lvert#1\right\rvert}

\begin{document}
\allowdisplaybreaks
\large

\title{Random ball-polyhedra and inequalities for
  intrinsic volumes}


\author{Grigoris Paouris\thanks{Grigoris Paouris is supported by the
    A. Sloan Foundation, US NSF grant CAREER-1151711 and BSF grant
    2010288. } \and Peter Pivovarov\thanks{ This work was partially
    supported by a grant from the Simons Foundation (\#317733 to Peter
    Pivovarov).}}

\maketitle
\begin{abstract}
  We prove a randomized version of the generalized Urysohn inequality
  relating mean width to the other intrinsic volumes. To do this, we
  introduce a stochastic approximation procedure that sees each convex
  body $K$ as the limit of intersections of Euclidean balls of large
  radii and centered at randomly chosen points. The proof depends on a
  new isoperimetric inequality for the intrinsic volumes of such
  intersections.  If the centers are i.i.d. and sampled according to a
  bounded continuous distribution, then the extremizing measure is
  uniform on a Euclidean ball.  If one additionally assumes that the
  centers have i.i.d. coordinates, then the uniform measure on a cube
  is the extremizer.  We also discuss connections to a randomized
  version of the extended isoperimetric inequality and symmetrization
  techniques.
\end{abstract}

\makeatletter{\renewcommand*{\@makefnmark}{} \footnotetext{\emph{2010
      Mathematics Subject Classification.} Primary: 52A22, 52A40}
  \footnotetext{\emph{Keywords and phrases.} Convex body, mean width,
    Minkowski symmetrization, Steiner symmetrization, rearrangement
    inequalities, Wulff shape, generalized Urysohn inequality,
    intersections of congruent balls}

\section{Introduction}

In this paper we discuss stochastic forms of classical inequalities
for intrinsic volumes. Recall that the intrinsic volumes $V_1,\ldots,
V_n$ are functionals on convex bodies which can be defined via the
Steiner formula: for any convex body $K\subseteq \R^n$ and $\eps>0$,
\begin{equation*}
  \abs{K+ \eps B} = \sum_{j=0}^n \omega_{n-j} V_{j}(K){\eps}^{n-j},
\end{equation*} 
where $\abs{\cdot}$ denotes $n$-dimensional Lebesgue measure,
$B=B_2^n$ is the unit Euclidean ball in $\R^n$, $\omega_{n-j}$ is the
volume of $B_2^{n-j}$, and $V_0 \equiv 1$; $V_1$ is a multiple of the
mean width $w$, $2V_{n-1}$ is the surface area and $V_n=\abs{\cdot}$
is the volume. The $V_j$'s satisfy the extended isoperimetric
inequality: for $1\ls j<n$,
\begin{equation}
  \label{eqn:iso}
  \left(\frac{V_n(K)}{V_n(B)}\right)^{1/n}\ls
  \left(\frac{V_j(K)}{V_j(B)}\right)^{1/j};
\end{equation}as well as the generalized Urysohn inequality: for $1<j\ls n$, 
\begin{equation}
  \label{eqn:ury}
  \left(\frac{V_j(K)}{V_j(B)}\right)^{1/j}\ls \frac{V_1(K)}{V_1(B)}.
\end{equation}
The classical isoperimetric inequality corresponds to $j=n-1$ in
\eqref{eqn:iso}; Urysohn's inequality to $j=n$ in \eqref{eqn:ury} (or
$j=1$ in \eqref{eqn:iso}).  The Alexandrov-Fenchel inequality for
mixed volumes (e.g.  \cite{Schneider_book_ed2}) implies both
\eqref{eqn:iso} and \eqref{eqn:ury}.  Alternatively, symmetrization
methods can be used. For example, Steiner symmetrization, which
preserves $V_n(K)$ and does not increase $V_j(K)$, can be used to
prove \eqref{eqn:iso}; a general framework for such inequalities,
building on work of Rogers and Shephard \cite{RS}, is discussed by
Campi and Gronchi in \cite{CG}. On the other hand, Minkowski
symmetrization, which fixes $V_1(K)$ and does not increase $V_j(K)$,
can be used to prove \eqref{eqn:ury}; see \cite[\S 6.4.4]{Hadwiger}
(\S \ref{section:prelim} contains definitions of these
symmetrizations).

Both Steiner and Minkowski symmetrization can also be applied in
suitable stochastic settings and yield stronger forms of such
inequalities for associated random convex sets.  For example, known
inequalities for the expected intrinsic volumes of random convex hulls
lead to \eqref{eqn:iso}. Such inequalities have their roots in the
classical Sylvester's four point problem, e.g.,
\cite{Pfiefer_Sylvester}, and build on work of Busemann
\cite{Busemann}, Groemer \cite{Groemer}, Rogers-Shephard \cite{RS},
Pfiefer \cite{Pfiefer}, Campi-Gronchi \cite{CG}, Hartzoulaki-Paouris
\cite{HP}, among others. Drawing on \cite{PaoPiv_probtake}, one can
formulate a type of stochastic dominance as follows. Assume that
$\abs{K}=\abs{B}$ and sample independent random vectors
$X_1,\ldots,X_N$ according to the uniform density
$\frac{1}{\abs{K}}\mathds{1}_{K}$, i.e., $\Prob{X_i\in A} =
\frac{1}{\abs{K}}\int_A \mathds{1}_K(x)dx$ for Borel sets $A\subseteq
\R^n$. Additionally, sample independent random vectors
$Z_1,\ldots,Z_N$ according to $\frac{1}{\abs{B}}\mathds{1}_B$. Then
for all $1\ls j\ls n$ and $s>0$,
\begin{equation}
  \label{eqn:random_iso}
  \Prob{V_j(\conv\{X_1,\ldots,X_N\})> s} \gr
  \Prob{V_j(\conv\{Z_1,\ldots,Z_N\})> s},
\end{equation} 
where $\conv$ denotes the convex hull. Integrating in $s$ yields
\begin{equation}
  \label{eqn:random_iso_avg}
\EE V_j(\conv\{X_1,\ldots,X_N\}) \gr \EE V_j(\conv\{Z_1,\ldots,Z_N\}).
\end{equation}
By the law of large numbers, the latter convex hulls converge to their
respective ambient bodies and thus when $N\rightarrow \infty$,
$V_j(K)\gr V_j(B)$ whenever $V_n(K)=V_n(B)$, which is equivalent to
\eqref{eqn:iso}.  Thus \eqref{eqn:iso} can be seen as a global
inequality which arises through a random approximation procedure in
which stochastic domination holds at each stage. In fact,
\eqref{eqn:random_iso} holds not just for the convex hull but for a
variety of other (linear, convex) operations and one can sample points
according to continuous distributions on $\R^n$ (see
\cite{PaoPiv_probtake}). Such distributional inequalities are useful
for small deviation inequalities for the volume of random sets
\cite{PaoPiv_smallball}; inequalities in the dual setting, obtained in
joint work with Cordero-Erausquin and Fradelizi \cite{CFPP}, lead to a
stochastic version of the Blaschke-Santal\'{o} inequality and the
$L_p$-versions of Lutwak and Zhang \cite{LZ}. All of these
inequalities concern volume and can be proved by Steiner
symmetrization in an appropriate setting.

In a natural dual setting, B\"{o}r\"{o}czky and Schneider
\cite{BorSch} studied intersections of random halfspaces according to
the following model. Let $\mathcal{H}$ denote the collection of all
affine hyperplanes in $\R^n$ with its usual topology. Given a convex
body $K$ with $V_1(K)= V_1(B)$, let $\mathcal{H}_K$ be the collection
of $H\in \mathcal{H}$ that meet $K+B$ but do not intersect the
interior of $K$. Let $\mu$ be the canonical motion invariant Borel
measure on $\mathcal{H}$ normalized so that $\mu(\{H\in
\mathcal{H}:H\cap M\not \not = \emptyset \})$ is the mean width $w(M)$
of $M$, for each convex body $M\subseteq \R^n$.  Let $2\mu_K$ be the
restriction of $\mu$ to $\mathcal{H}_K$ so that $\mu_K$ is a
probability measure.  Sample independent hyperplanes $H_1,\ldots, H_N$
according to $\mu_K$ and $J_1,\ldots, J_N$ according to $\mu_B$.
Denoting by $H_i^{-}$ the closed halfspace bounded by $H_i$ and containing $K$, same for
$J_i^{-}$ and $B$, the following inequality holds for expectations
\begin{equation}
  \label{eqn:SchBor}
  \EE V_j\left(\medcap_{i=1}^N H_i^{-}\cap (K+B)\right)^{1/j} \ls \EE
  V_j\left(\medcap_{i=1}^N J_i^{-}\cap (2B)\right)^{1/j}
\end{equation} 
(their result is stated only for $j=1$ but the proof applies to all
$1\ls j \ls n$; the proof is reproduced in \S \ref{section:U}). When
$N\rightarrow \infty$, one obtains $V_j(K)\ls V_j(B)$ whenever
$V_1(K)=V_1(B)$, which is equivalent to \eqref{eqn:ury}. The proof of
\eqref{eqn:SchBor} uses Minkowski symmetrization. We are not aware of
extensions of \eqref{eqn:SchBor} to higher moments or for stochastic
dominance.

In this paper we study a different model of random sets and a
connection to \eqref{eqn:ury} for which there is an underlying
stochastic dominance.  In \cite{BLNP}, Bezdek, L\'{a}ngi, Nasz\'{o}di,
and Papez study the intersection of finitely many (unit) Euclidean
balls, called {\it ball-polyhedra}, and lay out a broad framework for
their study; they treat analogues of classical theorems in convexity
such as those of Carath\'{e}odory and Steinitz, and they study their
facial structure.  Motivation for their study arises, in part, from
the Kneser-Poulsen Conjecture on the monotonicity of the volume of
intersections (or unions) of Euclidean balls under contractions of
their centers; see e.g. Bezdek's expository monograph \cite{Bezdek}.
We consider intersections of balls of a given radius $R>0$ with
centers $X_i$ that are sampled independently according to a continuous
distribution, i.e., a density $f:\R^n\rightarrow [0,\infty)$ with
  $\int_{\R^n} f(x) dx=1$ so that $\Prob{X_i\in A}=\int_A f(x)dx$ for
  Borel sets $A\subseteq \R^n$. In what follows, by a {\it probability
    density} we always mean that of a continuous distribution.
  Different random models associated with ball-polyhedra have been
  studied by Csik\'{o}s \cite{Csikos}, Ambrus, Kevei and V\'{i}gh
  \cite{AKV} and Fodor, Kevei and V\'{i}gh \cite{FKV}.

Our first result is the following isoperimetric inequality for
intrinsic volumes; here $B(x,r)$ is the closed Euclidean ball in
$\R^n$ centered at $x\in \R^n$ with radius $r>0$ (so $B=B(0,1)$).

\begin{theorem}
  \label{thm:ball}
  Let $N,n\gr 1$ and $R>0$.  Let $f$ be a probablity density on $\R^n$
  that is bounded by one.  Consider independent random vectors
  $X_1,\ldots,X_N$ sampled according to $f$ and $Z_1,\ldots,Z_N$
  according to $\mathds{1}_{B(0,r_n)}$ where $r_n>0$ is chosen so that
  $\abs{B(0,r_n)}=1$. Then for all $1\ls j\ls n$ and $s>0$,
  \begin{equation}
    \label{eqn:ball}
    \Prob{ V_j\left(\medcap_{i=1}^N B(X_i,R)\right)>s} \ls
    \Prob{V_j\left(\medcap_{i=1}^N B(Z_i,R)\right)>s }.
  \end{equation}
\end{theorem}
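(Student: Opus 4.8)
The plan is to prove \eqref{eqn:ball} by a symmetrization argument applied to the joint density of the centers, using a rearrangement inequality that controls each intrinsic volume $V_j$ of the ball-polyhedron $\medcap_{i=1}^N B(X_i,R)$. The key observation is that $\medcap_{i=1}^N B(X_i,R)$ is a decreasing set-function of the configuration $(X_1,\ldots,X_N)$ in a suitable sense: spreading the centers apart shrinks the intersection. The natural tool is the Rogers--Shephard/Gaussian-type symmetrization machinery of Campi--Gronchi \cite{CG} and its probabilistic extension in \cite{PaoPiv_probtake}. Concretely, one writes, for fixed $s>0$,
\[
\Prob{V_j\left(\medcap_{i=1}^N B(X_i,R)\right)>s}
= \int_{(\R^n)^N} \mathds{1}\!\left[V_j\left(\medcap_{i=1}^N B(x_i,R)\right)>s\right]\prod_{i=1}^N f(x_i)\,dx_1\cdots dx_N,
\]
and the goal is to show this functional does not increase when the product density $\prod f(x_i)$, with each factor bounded by one, is replaced by $\prod \mathds{1}_{B(0,r_n)}(x_i)$.

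The main steps are as follows. First, I would establish the \emph{deterministic} geometric fact that the set
\[
\mathcal{K}_s := \left\{(x_1,\ldots,x_N)\in(\R^n)^N : V_j\left(\medcap_{i=1}^N B(x_i,R)\right)\gr s\right\}
\]
has a structure amenable to symmetrization: for each coordinate direction $u$ and each fixing of the orthogonal components of all centers, the slice of $\mathcal{K}_s$ in the $i$-th center's $u$-coordinate is a symmetric interval (or is otherwise controlled) after applying Steiner symmetrization simultaneously in all $N$ center-variables along $u$. The relevant fact is that Steiner symmetrization of a convex body $C$ along $u$, applied compatibly to all the balls $B(x_i,R)$, does not decrease $V_n$ of (and in fact can only help bound $V_j$ of) the intersection — this is where one invokes that Steiner symmetrization commutes appropriately with intersection of balls whose centers are reflected in the same hyperplane, together with the fact that $V_j$ behaves monotonically under such operations as in \cite{CG}. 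Second, using that each marginal density $f$ is bounded by $1$, a standard layer-cake/bathtub argument shows that replacing $f$ by $\mathds{1}_{B(0,r_n)}$ (same total mass $1$, supported on the ball of volume $1$, which is the ``most concentrated'' density bounded by $1$) can only increase the probability that the configuration lands in the ``good'' set $\mathcal{K}_s$ — intuitively because $\mathcal{K}_s$ is (after symmetrization) a decreasing function of the spread of the centers, so concentrating mass near the origin increases the measure of $\mathcal{K}_s$. Third, one runs the symmetrization to the limit: iterating Steiner symmetrizations of the $N$-fold density converges (in the appropriate sense, cf. \cite{PaoPiv_probtake}) to the rotationally symmetric extremizer $\prod\mathds{1}_{B(0,r_n)}$, and semicontinuity of $V_j$ and of the probability in question passes the inequality to the limit.

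The hard part will be Step 1: making precise the sense in which $V_j\big(\medcap_i B(x_i,R)\big)$ interacts with Steiner symmetrization of the center-configuration. Unlike the convex-hull case of \cite{PaoPiv_probtake}, where one symmetrizes the \emph{vertices} and the convex hull behaves well, here one symmetrizes the \emph{centers} of balls forming an intersection; one must check that if $H$ is a hyperplane orthogonal to $u$ and each center $x_i$ is reflected to $x_i^\ast$ so that the pair $\{x_i,x_i^\ast\}$ straddles $H$ symmetrically, then the symmetrized intersection $\medcap_i B(x_i^\ast,R)$ contains (a translate/reflection adjustment of) the Steiner symmetral of $\medcap_i B(x_i,R)$, hence has $V_n$ at least as large, and more generally each $V_j$ is controlled. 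This should follow from the fact that $B(x_i^\ast,R)$ is exactly the Steiner symmetral (in direction $u$, about $H$) of $B(x_i,R)\cap B(\sigma_H x_i,R)$ together with the general principle that Steiner symmetrization commutes with intersection when performed about a common hyperplane — but the bookkeeping of centers versus reflected centers, and the reduction from $N$ balls to the symmetrized configuration, is the delicate point that the full proof must handle carefully. Once that geometric lemma is in place, Steps 2 and 3 are routine given the framework already developed in \cite{CG} and \cite{PaoPiv_probtake}.
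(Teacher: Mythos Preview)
Your overall architecture---rewrite the probability as an integral of the indicator of the superlevel set $\mathcal{K}_s$ against the product density, then symmetrize the density---is exactly the paper's. The gap is in your Step~1, and it is a real one for $j<n$.

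You propose to show that, after ``symmetrizing the centers'' about a hyperplane, the new intersection $\medcap_i B(x_i^\ast,R)$ contains the Steiner symmetral of the old intersection, and then deduce that $V_j$ went up. For $j=n$ this is fine, since Steiner symmetrization preserves volume and monotonicity finishes. For $j<n$, however, Steiner symmetrization \emph{decreases} $V_j$; so even if the containment you want is true, the chain $V_j(\text{new})\gr V_j(\text{Steiner symmetral})$ followed by $V_j(\text{Steiner symmetral})\gr V_j(\text{old})$ fails at the second step. Your parenthetical ``more generally each $V_j$ is controlled'' hides the problem rather than solving it. The operation you need on the ball-polyhedron is not a Steiner-type symmetrization of the body; what matters is the behavior of $V_j$ under \emph{Minkowski} averaging.

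The paper's fix is clean: it never Steiner-symmetrizes the body. Instead it proves directly that
\[
(x_1,\ldots,x_N)\ \longmapsto\ V_j\!\left(\medcap_{i=1}^N B(x_i,R)\right)
\]
is even and quasi-concave on its support, via the elementary inclusion
\[
\medcap_{i=1}^N B\!\left(\tfrac{u_i+v_i}{2},R\right)\ \supseteq\ \tfrac{1}{2}\medcap_{i=1}^N B(u_i,R)\ +\ \tfrac{1}{2}\medcap_{i=1}^N B(v_i,R),
\]
together with the monotonicity of $V_j$ and the quasi-concavity of $V_j$ with respect to Minkowski addition (i.e., the concavity of $V_j^{1/j}$, from Alexandrov--Fenchel or Hadwiger's Minkowski-symmetrization argument). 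Evenness along each fiber $t\mapsto(y_1+t_1z,\ldots,y_N+t_Nz)$ is just rotation invariance: reflecting all centers in $z^{\perp}$ reflects the intersection, and $V_j$ is unchanged. With this in hand, Christ's version of the Rogers/Brascamp--Lieb--Luttinger inequality gives the passage $f\to f^\ast$. For your Step~2, a bare bathtub argument is not quite enough in the product setting; the paper invokes Kanter's theorem (peakedness is preserved under products of unimodal densities) plus the one-dimensional fact that a radial density bounded by one is less peaked than $\mathds{1}_{B(0,r_n)}$, applied to the symmetric convex set $\mathcal{K}_s\subseteq(\R^n)^N$.
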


For a particular choice of density $f$, \eqref{eqn:ball} can be seen
as a form of \eqref{eqn:ury} in which stochastic dominance holds.  The
connection arises from the following: given a convex body $K\subseteq
\R^n$ with the origin in its interior and $K\subseteq B(0,R)$, define
a star-shaped set $A(K,R)$ with radial function $\rho_{A(K,R)}(-\theta)
= R-h_K(\theta)$ (see \S \ref{section:prelim} for definitions).
Euclidean balls centered at points in $A(K,R)$ are tangent to
hyperplanes that meet $B(0,R)$ but not the interior of $K$.  By
choosing $f=\frac{1}{\abs{A(K,R)}}\mathds{1}_{A(K,R)}$ in Theorem
\eqref{thm:ball}, we get the following corollary.

\begin{corollary}
  \label{cor:ball}
  Let $K$ be a convex body in $\R^n$ with the origin in its interior,
  $R>0$ and assume $K\subseteq B(0,R)$. Consider independent random
  vectors $X_1,\ldots, X_N$ sampled according to
  $\frac{1}{\abs{A(K,R)}}\mathds{1}_{A(K,R)}$ and $Z_1,\ldots,Z_N$
  according to $\frac{1}{\abs{rB}}\mathds{1}_{rB}$, where $r=r(K,n,R)$
  satisfies $\abs{A(K,R)}=\abs{rB}$.  Then for each $p\in \R$,
  \begin{equation}
    \label{eqn:ball_E}
    \left(\EE V_j\left(\medcap_{i=1}^N
    B(X_i,R)\right)^p\right)^{1/p}\ls \left(\EE
    V_j\left(\medcap_{i=1}^N B(Z_i,R)\right)^p\right)^{1/p}.
  \end{equation}
\end{corollary}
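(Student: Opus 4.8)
The plan is to deduce Corollary \ref{cor:ball} directly from Theorem \ref{thm:ball} by first reducing to the case that the density $f$ is bounded by one, then integrating the distributional inequality \eqref{eqn:ball} against the measure $p s^{p-1}\,ds$, using a scaling argument to handle the normalization.

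\begin{proof}[Proof sketch of Corollary \ref{cor:ball}]
First I would rescale so that Theorem \ref{thm:ball} applies. The set $A=A(K,R)$ is a bounded star-shaped set with $\abs{A}<\infty$, so choose $\la>0$ with $\abs{\la A}=1$; then $g=\mathds{1}_{\la A}$ is a probability density bounded by one. Apply Theorem \ref{thm:ball} with this $g$: if $Y_1,\dots,Y_N$ are i.i.d.\ with density $g$ and $Z_1',\dots,Z_N'$ are i.i.d.\ uniform on $B(0,r_n)$ with $\abs{B(0,r_n)}=1$, then for all $1\ls j\ls n$ and $s>0$,
\begin{equation*}
  \Prob{V_j\!\left(\medcap_{i=1}^N B(Y_i,\la R)\right)>s}\ls \Prob{V_j\!\left(\medcap_{i=1}^N B(Z_i',\la R)\right)>s}.
\end{equation*}
(Here the radius must be scaled to $\la R$ so that the intersection $\medcap B(Y_i,\la R) = \la\,\medcap B(\la^{-1}Y_i,R)$ with $\la^{-1}Y_i$ distributed according to $\mathds{1}_A$ suitably normalized — I would record the elementary identity $B(\la x,\la R)=\la B(x,R)$ and $V_j(\la M)=\la^j V_j(M)$.) Now $\la^{-1}Y_i$ has density $\abs{A}^{-1}\mathds{1}_{A}$, i.e.\ it is distributed as $X_i$ in the statement, and $\la^{-1}Z_i'$ is uniform on $\la^{-1}B(0,r_n)=B(0,\la^{-1}r_n)$; since $\abs{A}=\la^{-n}$ we have $\abs{B(0,\la^{-1}r_n)}=\la^{-n}=\abs{A}$, so $\la^{-1}r_n B = rB$ with $r=r(K,n,R)$ as defined, provided the radius $R$ is also the one used in $Z_i$. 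The one remaining scaling check is that the uniform random vectors in the Corollary use balls of radius $R$, not $\la R$; dividing through by $\la$ and relabeling $Z_i=\la^{-1}Z_i'$ converts $B(Z_i',\la R)$ to $\la B(Z_i,R)$, and the factor $\la^j$ from $V_j$ can be absorbed into the threshold $s$. Carrying this through yields, for all $s>0$,
\begin{equation*}
  \Prob{V_j\!\left(\medcap_{i=1}^N B(X_i,R)\right)>s}\ls \Prob{V_j\!\left(\medcap_{i=1}^N B(Z_i,R)\right)>s}.
\end{equation*}

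With this stochastic dominance in hand, the passage to $L_p$-moments is routine. For $p>0$, write $\EE W^p = \int_0^\infty p\,s^{p-1}\Prob{W>s}\,ds$ for any nonnegative random variable $W$; applying this to $W=V_j(\medcap B(X_i,R))$ and $W'=V_j(\medcap B(Z_i,R))$ and integrating the displayed inequality against $p\,s^{p-1}\,ds$ gives $\EE W^p\ls \EE (W')^p$, hence $(\EE W^p)^{1/p}\ls (\EE (W')^p)^{1/p}$. For $p<0$ the map $t\mapsto t^{p}$ is decreasing on $(0,\infty)$, so one uses instead $\EE W^p = \int_0^\infty \Prob{W^p>s}\,ds = \int_0^\infty \Prob{W<s^{1/p}}\,ds$ together with $\Prob{W<t}\gr \Prob{W'<t}$ (the complementary form of the dominance); this reverses the inequality on $\EE W^p$ but reverses it back when taking the $(1/p)$-th power, again yielding \eqref{eqn:ball_E}. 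The case $p=0$, if needed, follows by taking $p\to 0$ (or is interpreted as the geometric mean via the limit of the $L_p$ means). One should note that all these integrals are finite because the sets are bounded: $\medcap_{i=1}^N B(X_i,R)\subseteq B(X_1,R)$, so $V_j(\medcap B(X_i,R))\ls V_j(B(0,R))=R^jV_j(B)$ almost surely, and similarly the $Z_i$-intersection is contained in a fixed ball; for negative $p$ one needs in addition that the intersection is nonempty with positive $V_j$ almost surely, which — when the common density has connected support or $N$ is fixed — can fail, so for $p<0$ the statement should be read with the convention $t^p=+\infty$ when $t=0$ (or under the implicit assumption that the relevant events have probability giving finite moments).
\end{proof}

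\noindent\emph{Remarks on the plan.} The only genuinely delicate point is the bookkeeping of the two scalings — the normalization $\abs{\la A}=1$ forced by Theorem \ref{thm:ball} versus the normalization $\abs{A(K,R)}=\abs{rB}$ in the Corollary, and the compensating rescaling of the radius from $R$ to $\la R$ — together with the homogeneity $V_j(\la M)=\la^jV_j(M)$ which lets one absorb $\la^j$ into the threshold $s$. Everything else (the layer-cake representation of moments, the sign flips for negative $p$, the integrability from boundedness of the sets) is standard. I expect no obstacle beyond care with these scalars; the substance of the Corollary is entirely contained in Theorem \ref{thm:ball}.
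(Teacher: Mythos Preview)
Your approach is correct and follows the same overall strategy as the paper---obtain stochastic dominance from the main theorem, then pass to $L_p$-moments---but two points deserve comment.

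First, the scaling bookkeeping is unnecessary. The paper appeals to Theorem~\ref{thm:main}(a) rather than Theorem~\ref{thm:ball}: since the symmetric decreasing rearrangement of $f=\frac{1}{\abs{A(K,R)}}\mathds{1}_{A(K,R)}$ is precisely $f^{*}=\frac{1}{\abs{rB}}\mathds{1}_{rB}$ (the rearrangement of an indicator is the indicator of the centered ball of the same volume, and the normalizing constant is preserved), the comparison between the $X_i$ and the $Z_i$ drops out immediately with no rescaling of density or radius. Your route through Theorem~\ref{thm:ball} and the dilation $\la$ is correct but longer.

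Second, there is a genuine (though easily repaired) gap in your treatment of $p<0$: you flag the possibility that $\medcap_{i=1}^{N} B(X_i,R)$ is empty but leave it as a convention or implicit assumption. In this specific setup it never happens, and the paper supplies the missing observation. Choose $\eps>0$ with $B(0,\eps)\subseteq K$; then for every $x\in A(K,R)$ one has $\abs{x}\ls R-\eps$ (because $\rho_{A(K,R)}(-\theta)=R-h_K(\theta)\ls R-\eps$), hence $B(0,\eps)\subseteq B(x,R)$. Thus $B(0,\eps)\subseteq \medcap_{i=1}^{N} B(X_i,R)$ almost surely, so $V_j$ of the intersection is bounded below by $\eps^{j}V_j(B)>0$. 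The same holds for the $Z_i$, since $\abs{rB}=\abs{A(K,R)}\ls \abs{(R-\eps)B}$ forces $r\ls R-\eps$. With this lower bound all moments are finite and positive, and your layer-cake argument goes through cleanly for every $p\in\R$.
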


For large $R$ the intersection of such balls resembles intersections
of halfspaces; it turns out that the volume normalization
$\abs{A(K,R)}=\abs{rB}$ amounts to a constraint on the mean width of
$K$. When $N\rightarrow \infty$ and $R\rightarrow \infty$ in
\eqref{eqn:ball_E}, we get \eqref{eqn:ury}. In fact, for fixed $N$,
when $p\rightarrow -\infty$ and $R\rightarrow \infty$, we obtain the
following result: among all convex bodies $K$ of a given mean width,
the minimal $j$-th intrinsic volume of the intersection of $N>n$
halfspaces containing $K$ is maximized when $K$ is a ball; the latter
is a special case of a result of Schneider \cite{Schneider_67}, which
is also proved using Minkowski symmetrization.

The proof of Theorem \ref{thm:ball} draws on both symmetrization
techniques discussed above. We use the fact that $K\mapsto
V_j(K)^{1/j}$ is concave with respect to Minkowski addition, which
follows by Minkowski symmetrization \cite[\S 6.4.4]{Hadwiger}, or the
Alexandrov-Fenchel inequalities.  However, using Steiner
symmetrization and rearrangement inequalities, which are typically
applied to volumetric inequalities, we obtain a distributional form of
\eqref{eqn:ury}, which for $j<n$ is not a volumetric inequality.  We
make essential use of continuous distributions and intersections of
Euclidean balls, as opposed to intersections of translates of other
convex bodies or halfspaces (see Remark \ref{remark:ball_even}).
Another fundamental ingredient in our proof is Kanter's theorem from
\cite{Kanter} on stochastic dominance for products of unimodal
densities, which we have not used before in this context. The
techniques used in the proof of Theorem \ref{thm:ball} also apply when
$V_j$ is replaced by a function $\phi$ which is invariant under
rotations, monotone and {\it quasi-concave} with respect to Minkowski
addition (see Theorem \ref{thm:main}).

As mentioned, \eqref{eqn:iso} and \eqref{eqn:ury} share a common
result - Urysohn's inequality. We have discussed three randomized
inequalities that have Urysohn's inequality as a consequence: for
random convex hulls by taking $j=1$ in \eqref{eqn:random_iso_avg}; for
random halfspaces by taking $j=n$ in \eqref{eqn:SchBor}; for random
ball-polyhedra by taking $j=n$ in \eqref{eqn:ball_E}.  It is natural
to investigate the relationship between the randomized forms. We note
that the random ball-polyhedra version implies the random convex hull
version. This is a consequence of a result of Gorbovickis
\cite{Gorbovickis}, used to establish the Kneser-Poulsen conjecture
for large radii (see \S \ref{section:U}).

We also consider random ball-polyhedra with independently chosen
centers $X_i=(X_{i1},\ldots,X_{in})\in \R^n$ having independent
coordinates and bounded densities, say by one. In this case, the
uniform density on the unit cube $Q_n=[-1/2,1/2]^n$ is the extremizer.

\begin{theorem}
  \label{thm:cube}
  Let $N,n\gr 1$ and $R>0$.  Let $h(x)=\prod_{i=1}^n h_i(x_i)$, where
  each $h_i$ is a probability density on $\R$ that is bounded by one.
  Consider independent random vectors $X_1,\ldots,X_N$ sampled
  according to $h$ and $Y_1,\ldots,Y_N$ according to
  $\mathds{1}_{Q_n}$. Then for all $1\ls j\ls n$ and $s>0$,
  \begin{equation}
    \label{eqn:cube}
    \Prob{ V_j\left(\medcap_{i=1}^N B(X_i,R)\right)> s} \ls
    \Prob{V_j\left(\medcap_{i=1}^N B(Y_i,R)\right)>s }.
  \end{equation}
\end{theorem}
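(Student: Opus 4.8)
The plan is to reduce the cube statement to the framework already used for Theorem \ref{thm:ball}, replacing the rearrangement step that produces a Euclidean ball by a coordinatewise rearrangement that produces the cube. As in the proof of Theorem \ref{thm:ball}, the key structural fact is that $K\mapsto V_j(K)^{1/j}$ is quasi-concave with respect to Minkowski addition, so that for fixed centers $x_1,\dots,x_N$ the functional
\begin{equation*}
  F(x_1,\dots,x_N) = V_j\left(\medcap_{i=1}^N B(x_i,R)\right)
\end{equation*}
is, up to the monotone map $t\mapsto t^{1/j}$, a quasi-concave function of the tuple $(x_1,\dots,x_N)\in(\R^n)^N$ (each ball $B(x_i,R)$ translates affinely with $x_i$, intersections of translates behave well under Minkowski averaging, and $V_j^{1/j}$ is monotone and Minkowski-quasi-concave). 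Consequently the superlevel set $\{(x_1,\dots,x_N): F(x_1,\dots,x_N)>s\}$ is a convex, rotation- and reflection-symmetric subset of $(\R^n)^N$; moreover it is symmetric under permuting the roles of the $N$ points.

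First I would set up the one-dimensional rearrangement. Write $x_i=(x_{i1},\dots,x_{in})$ and fix a coordinate index $k\in\{1,\dots,n\}$. Freezing all coordinates $x_{i\ell}$ with $\ell\neq k$, the map $(x_{1k},\dots,x_{Nk})\mapsto F$ has a superlevel set in $\R^N$ that is convex and symmetric under reflection in each coordinate hyperplane (this is exactly the Minkowski-quasi-concavity applied to translations in the single direction $e_k$, which is a sub-family of the full affine translations), hence it is a symmetric convex body in $\R^N$, so in particular unconditional. Steiner symmetrization / the Brascamp--Lieb--Luttinger-type rearrangement inequality, or more directly Kanter's theorem on stochastic dominance for products of symmetric unimodal densities (already invoked for Theorem \ref{thm:ball}), then shows that replacing the law of the $k$-th coordinates of all $N$ points — which is the product $\prod_{i=1}^N h_k$ of one-dimensional densities bounded by one — by $N$ i.i.d. copies of $\mathds{1}_{[-1/2,1/2]}$ can only increase $\Prob{F>s}$. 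Here the bound $h_k\le 1$ is what guarantees that $h_k$ is dominated in the convex order (equivalently, $\mathds{1}_{[-1/2,1/2]}$ is the peakest symmetric unimodal density of mass one bounded by one), which is the precise hypothesis needed to apply Kanter's theorem; note that one may first symmetrize each $h_k$ about its median at no cost to $\Prob{F>s}$, using the reflection symmetry of the superlevel set, so the unimodality/symmetry hypothesis of Kanter is met.

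Then I would iterate over $k=1,\dots,n$. After the $k$-th step the $k$-th coordinates of $X_1,\dots,X_N$ are i.i.d. uniform on $[-1/2,1/2]$ while the remaining coordinates retain their (product, bounded-by-one) structure, and crucially the joint density of the centers remains a product of one-dimensional densities bounded by one in each still-untouched coordinate, so the hypothesis is preserved at each stage. After $n$ steps all coordinates are i.i.d. uniform on $[-1/2,1/2]$, i.e., each center is distributed as $\mathds{1}_{Q_n}$ and the $N$ centers are independent, which is exactly the law of $Y_1,\dots,Y_N$; chaining the inequalities yields \eqref{eqn:cube}. The main obstacle, and the place requiring care, is justifying that freezing $n-1$ coordinates leaves a function of the last coordinates whose superlevel set is a genuine symmetric convex (hence unconditional) body in $\R^N$ to which Kanter's theorem applies — this needs the Minkowski-quasi-concavity of $V_j^{1/j}$ together with the observation that restricting to translations along a single coordinate direction is a legitimate specialization, plus a measurability/closedness argument so that the superlevel sets are Borel and the rearrangement inequalities apply verbatim; these are precisely the technical points developed in the proof of Theorem \ref{thm:ball} (cf. Theorem \ref{thm:main}), so the argument here is a coordinatewise adaptation of that machinery.
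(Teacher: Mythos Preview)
Your approach uses the same three ingredients as the paper's (Minkowski quasi-concavity of $V_j^{1/j}$ from Lemma~\ref{lemma:concave}, Rogers--Brascamp--Lieb--Luttinger rearrangement, and Kanter's theorem), but two steps are not right as written. First, the frozen superlevel set $\{t\in\R^N : F_{e_k,Y}(t)>s\}$ is \emph{not} unconditional: flipping a single $t_i$ reflects only the $i$-th center about $e_k^\perp$, which changes the pairwise distances between centers and hence the intersection (try $N=n=2$, $y_1=e_1$, $y_2=-e_1$, $t_1=t_2=1$, and flip only $t_1$). What does hold is \emph{origin}-symmetry, $F_{e_k,Y}(-t)=F_{e_k,Y}(t)$, because reflecting all centers about $e_k^\perp$ simultaneously reflects the whole intersection and $V_j$ is invariant under orthogonal maps --- this is exactly the second part of Lemma~\ref{lemma:concave} and uses condition~(c). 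Origin-symmetry is all Kanter needs, so the error is harmless downstream, but your justification (``Minkowski-quasi-concavity applied to translations'') yields only convexity, not any symmetry. Second, ``symmetrize each $h_k$ about its median'' does not produce a unimodal density, and without unconditionality you cannot even argue that such a reflection leaves $\Prob{F>s}$ unchanged. The correct step --- which you mention but then set aside --- is to pass to the symmetric decreasing rearrangement $h_k^*$ via Theorem~\ref{thm:RBLLC} in its one-dimensional form (Remark~\ref{remark:Christ}(i)), using precisely the evenness and quasi-concavity of $F_{e_k,Y}$; then $h_k^*$ is symmetric unimodal with $\|h_k^*\|_\infty\le1$, hence less peaked than $\mathds{1}_{[-1/2,1/2]}$ by Lemma~\ref{lemma:peaked}, and Kanter applies.

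The paper bypasses the coordinate iteration altogether by working once on all of $\R^{nN}$: it regards $F$ as an even quasi-concave function there (evenness needs only $\phi(-K)=\phi(K)$, not condition~(c)), applies Theorem~\ref{thm:RBLLC} to all $nN$ one-dimensional factors $h_{ij}$ simultaneously, and then applies Kanter once on $\R^{nN}$ to pass from $\prod_{i,j}h_{ij}^*$ to $\prod_i\mathds{1}_{Q_n}$. With the two fixes above your iterated argument becomes a valid unfolding of this, at the cost of requiring the stronger rotation invariance of $V_j$ to get evenness at each stage.
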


Lastly, on the organization of the paper: we recall definitions in \S
\ref{section:prelim}. Theorems \ref{thm:ball} and \ref{thm:cube} are
proved in \S \ref{section:rbp}. In \S \ref{section:W}, we recall the
definition of the Wulff shape and discuss a connection to (non-random)
ball-polyhedra.  In \S \ref{section:U}, we prove Corollary
\ref{cor:ball} and compare it to kindred results for intersections of
halfspaces, including a numerical improvement on the minimal volume
simplex containing a convex body due to Kanazawa \cite{Kanazawa}; we
also discuss Minkowski symmetrization, and compare two random versions
of Urysohn's inequality.

\section{Preliminaries}

\label{section:prelim}

We work in Euclidean space $\R^n$ with the canonical inner product
$\langle \cdot, \cdot \rangle$, Euclidean norm $\abs{\cdot}$; we also
use $\abs{\cdot}$ (or $V_n$) for volume. As above, the unit Euclidean
ball in $\R^n$ is $B=B_2^n$ and its volume is $\omega_n:=\abs{B_2^n}$;
$S^{n-1}$ is the unit sphere, equipped with the Haar probability
measure $\sigma$.

A convex body $K\subseteq \R^n$ is a compact, convex set with
non-empty interior.  The set of all convex bodies in $\R^n$ is denoted
by $\mathcal{K}^n$. For $K,L\in \mathcal{K}^n$, the Minkowski sum
$K+L$ is the set $\{x+y:x\in K, y\in L\}$; for $\alpha>0$, $\alpha K =
\{\alpha x:x\in K\}$.  We say that $K$ is symmetric if it is symmetric
about the origin, i.e., $-x\in K$ whenever $x\in K$.  For $K\in
\mathcal{K}^n$, the support function of $K$ is given by
\begin{equation*} 
  h_K(x) =\sup\{\langle y, x\rangle\; : \ y\in K\} \quad (x\in \R^n).
\end{equation*}
The mean width of $K$ is
\begin{eqnarray*}
  w(K)= \int_{S^{n-1}}h_K(\theta)+ h_K(-\theta)d\sigma(\theta) 
  =  2\int_{S^{n-1}}h_K(\theta)d\sigma(\theta).
\end{eqnarray*}
If $K\in \mathcal{K}^n$ and $u\in S^{n-1}$, the Minkowski symmetral of
$K$ about $u^{\perp}$ is the convex body
\begin{equation*}
M_u(K) =\frac{K+R_u(K)}{2},
\end{equation*} 
where $R_u$ is the reflection about $u^{\perp}$. The Steiner symmetral
of a convex body will be defined later, and more generally for
functions.  

For compact sets $C_1, C_2$ in $\R^n$,  we let
$\delta^{H}(C_1,C_2)$ denote Hausdorff distance:
\begin{eqnarray*}
  \delta^{H}(C_1,C_2) &= &\inf\{\eps>0\; : \ C_1\subseteq C_2+\eps B_2^n,
  C_2\subseteq C_1+\eps B_2^n\}.
\end{eqnarray*}  

A set $K\subseteq\R^n$ is star-shaped if it is compact, contains the
origin in its interior and for every $x\in K$ and $\lambda\in[0,1]$ we
have $\lambda x\in K$. We call $K$ a star-body if its radial function
\begin{equation*}
\rho_K(\theta) =\sup\{s>0 : s\theta\in K\} \quad (\theta \in S^{n-1})
\end{equation*}
is positive and continuous. Any positive continuous function
$f:S^{n-1}\rightarrow \R$ determines a star body with radial function
$f$.

For non-negative functions $f$ and $g$ on $[0,\infty)$, we write $f(r)
  = O(g(r))$ as $r\rightarrow \infty$ if there exists $M>0$ and $r_0>0$
  such that $f(r) \ls M g(r)$ for all $r\gr r_0$; we write $f(r)
  =o(g(r))$ if $f(r)/g(r)\rightarrow 0$ as $r\rightarrow \infty$.

We say that a non-negative function $f$ on $\R^n$ is quasi-concave if
$\{x\in \R^n:f(x)>s\}$ is convex for each $s \gr 0$.

For Borel sets $A\subseteq \R^n$ with $\abs{A} <\infty$, the
volume-radius $\mathop{\rm vr}(A)$ is the radius of a Euclidean ball
with the same volume as $A$; the symmetric rearrangement $A^{\ast }$
of $A$ is the (open) Euclidean ball of radius $\mathop{\rm vr}(A)$
centered at the origin.  The symmetric decreasing rearrangement of
$1_A$ is defined by $(1_A)^{\ast}:=1_{A^{\ast }}$. If
$f:{\R}^n\rightarrow {\R}^+$ is an integrable function, we define its
symmetric decreasing rearrangement $f^{\ast}$ by
\begin{equation*}
  f^{\ast }(x)=\int_0^{\infty }1^{\ast }_{\{ f> s\}}(x)ds
  =\int_0^{\infty }1_{\{ f>s\}^{\ast }}(x)ds.
\end{equation*}
The latter should be compared with the ``layer-cake representation''
of $f$:
\begin{equation}
  \label{eqn:layer_cake}
  f(x)=\int_0^{\infty }1_{\{ f> s\}}(x)ds;
\end{equation}
see \cite[Theorem 1.13]{LL_book}.  The function $f^{\ast}$ is
radially symmetric, radially decreasing (henceforth we simply say
`decreasing') and equimeasurable with $f$, i.e., $\{f>s\}$ and
$\{f^*>s\}$ have the same volume for each $s > 0$.  By
equimeasurability one has $\norm{f}_p=\norm{f^*}_p$ for each $1\ls
p\ls \infty$, where $\norm{\cdot}_p$ denotes the $L_p(\R^n)$-norm.
For a nonnegative, integrable function $f$ on $\R^n$, the
rearrangement $f^\ast$ can be reached by a sequence of \emph{Steiner
  symmetrals} $f^*(\cdot|\theta)$, which correspond to symmetrization
in dimension one in the direction $\theta\in S^{n-1}$; namely
$f^*(\cdot|\theta)$ is obtained by rearranging $f$ along every line
parallel to $\theta$. The function $f^*(\cdot|\theta)$ is symmetric
with respect to $\theta^\perp$. We refer the reader to the book
\cite{LL_book} for further background material on rearrangements of
functions.

\section{Extremal inequalities for random ball-polyhedra}

\label{section:rbp}

In this section we prove a more general version of Theorem
\ref{thm:ball}.  It concerns a family of functions
$\phi:\mathcal{K}^n\rightarrow [0,\infty)$ satisfying the following
  three conditions:
\begin{itemize}
\item[(a)] {\it quasi-concave with respect to Minkowski addition}: for
  all $K, L\in \mathcal{K}^n$ and $\lambda\in (0,1)$,
  \begin{equation*}
    \phi((1-\lambda)K +\lambda L) \gr \min(\phi(K),\phi(L));
    \end{equation*}
  \item[(b)] {\it monotone}: $\phi(K)\ls \phi(L)$ whenever $K, L
    \in \mathcal{K}^n$ satisfy $K\subseteq L$;
    \item[(c)] {\it rotation-invariant}: $\phi(UK) = \phi(K)$ for all
      orthogonal transformations $U$ of $\R^n$ and $K\in
      \mathcal{K}^n$.
\end{itemize}

The concavity of $K\mapsto V_j(K)^{1/j}$ can be proved using Minkowski
symmetrization as in \cite[\S 6.4.4]{Hadwiger} or as a consequence of
the Alexandrov-Fenchel inequalities; $V_j$ also satisfies $(b)$ and
$(c)$ (see \cite{Schneider_book_ed2} for background).

\begin{theorem} 
  \label{thm:main}
  Let $N,n\gr 1$ and $r_1,\ldots, r_N\in (0,\infty)$. Assume that
  $\phi:\mathcal{K}^n\rightarrow [0,\infty)$ satisfies $(a), (b)$ and
    $(c)$. Let $f_1,\ldots, f_N$ be probability densities on $\R^n$.
    Consider independent random vectors $X_1,\ldots, X_N$ and
    $X_1^*,\ldots,X_N^*$ such that $X_i$ is distributed according to
    $f_i$ and $X_i^*$ according to $f_i^*$, for $i=1,\ldots,N$.  Then
    for any $s\gr 0$,
  \begin{equation}
    \label{eqn:main_a}
    \Prob{\phi\left(\medcap_{i=1}^N B(X_i,r_i)\right)> s}\ls
    \Prob{\phi\left(\medcap_{i=1}^N B(X_i^*,r_i)\right)> s}.
  \end{equation}
  Furthermore, assume each $f_i$ is bounded. Let $Z_1,\ldots,Z_N$ be
  independent random vectors with $Z_i$ distributed according to $a_i
  \mathds{1}_{b_i B}$, where $a_i = \norm{f_i}_{\infty}$ and $b_i$
  satisfies $ \int_{\R^n} a_i\mathds{1}_{b_i B}dx = 1$, for
  $i=1,\ldots,N$. Then
  \begin{equation}
    \label{eqn:main_b}
    \Prob{\phi\left(\medcap_{i=1}^N B(X_i,r_i)\right)> s}\ls
    \Prob{\phi\left(\medcap_{i=1}^N B(Z_i,r_i)\right)> s}.
  \end{equation}
\end{theorem}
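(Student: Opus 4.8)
The plan is to reduce the distributional inequality to a single application of a rearrangement inequality (a Brascamp--Lieb--Luttinger / Rogers-type symmetrization) in the product space $(\R^n)^N$, followed by Kanter's theorem to pass from $f_i^*$ to the uniform densities $a_i\mathds{1}_{b_iB}$. First I would fix $s\gr 0$ and introduce the ``bad event'' set
\[
  G=G_s=\Bigl\{(x_1,\dots,x_N)\in(\R^n)^N : \phi\Bigl(\medcap_{i=1}^N B(x_i,r_i)\Bigr)>s\Bigr\},
\]
so that the left side of \eqref{eqn:main_a} is $\int_{(\R^n)^N}\prod_{i=1}^N f_i(x_i)\,\mathds{1}_G\,dx$ and the right side is the same integral with each $f_i$ replaced by $f_i^*$. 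By the layer-cake representation \eqref{eqn:layer_cake} of each $f_i$ and multilinearity, it suffices to prove the inequality when each $f_i$ is an indicator $\mathds{1}_{A_i}$ of a set of finite measure; equivalently, one shows
\[
  \bigl|\,(A_1\times\cdots\times A_N)\cap G\,\bigr|\;\ls\;\bigl|\,(A_1^*\times\cdots\times A_N^*)\cap G\,\bigr|.
\]
This is exactly the form in which a multidimensional rearrangement inequality applies, provided $G$ itself has the right symmetry and convexity.

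The key structural step is therefore to understand $G$. Here is where conditions $(a)$, $(b)$, $(c)$ enter. For the complement of a rearrangement to increase mass we want $G^c$ to be, in each block of variables, ``rearrangement-friendly'': concretely I would show that the set-valued map $(x_1,\dots,x_N)\mapsto\medcap_i B(x_i,r_i)$ behaves well under Minkowski averaging of the $x_i$'s, namely that
\[
  \medcap_{i=1}^N B\bigl((1-\lambda)x_i+\lambda y_i,\,r_i\bigr)\;\supseteq\;(1-\lambda)\Bigl(\medcap_{i=1}^N B(x_i,r_i)\Bigr)+\lambda\Bigl(\medcap_{i=1}^N B(y_i,r_i)\Bigr),
\]
because $B(z,r)=z+rB$ and intersections of translates of a fixed convex body contain the corresponding Minkowski combination. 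Combined with quasi-concavity $(a)$ and monotonicity $(b)$ of $\phi$, this says that $G$ is closed under the diagonal Minkowski average of any two of its points, i.e.\ $G$ is essentially a convex-type set in $(\R^n)^N$ along diagonals; rotation-invariance $(c)$ of $\phi$ and of the Euclidean balls makes $G$ invariant under the diagonal action of $O(n)$. These two properties — diagonal quasi-convexity and diagonal rotation invariance — are precisely the hypotheses under which a Steiner-symmetrization / Brascamp--Lieb--Luttinger-type inequality yields, for any reflection or line symmetrization $\tau=\tau_\theta$ acting simultaneously on all $N$ coordinates, $|(\prod A_i)\cap G|\ls |(\prod \tau A_i)\cap G|$; iterating $\tau_\theta$ over a dense sequence of directions drives each $A_i$ to $A_i^*$ and $G$ is preserved at every step, giving \eqref{eqn:main_a} for indicators and hence in general.

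For the second statement \eqref{eqn:main_b}, I would apply \eqref{eqn:main_a} to get the bound with the $f_i^*$, which are radially symmetric decreasing densities, hence unimodal; then invoke Kanter's theorem from \cite{Kanter}: if $g_i$ is a symmetric unimodal density on $\R^n$ and $u_i$ is the uniform density on a centered ball with $\|u_i\|_\infty=\|g_i\|_\infty$, then $\prod g_i$ is dominated by $\prod u_i$ in the sense that $\int\prod g_i\,\mathds{1}_C \ls \int\prod u_i\,\mathds{1}_C$ for every symmetric convex (or suitably star-shaped) set $C\subseteq(\R^n)^N$ — and the set $G_s$, being closed under diagonal averaging and diagonal rotations and also under the coordinatewise dilations that Kanter's comparison requires, is of the admissible type. (The matching of sup-norms is why $a_i=\|f_i\|_\infty$ and $b_i$ is chosen to normalize $a_i\mathds{1}_{b_iB}$ to a probability density; note $\|f_i^*\|_\infty=\|f_i\|_\infty$ by equimeasurability.) Chaining the two comparisons gives \eqref{eqn:main_b}.

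I expect the main obstacle to be the precise verification that $G_s$ has exactly the geometric properties needed to invoke both the rearrangement inequality in the first step and Kanter's theorem in the second — in particular, Kanter's theorem is usually stated for symmetric convex sets, whereas $G_s$ is only ``diagonally'' convex and rotation invariant, so I would either need a version of Kanter's result adapted to this weaker symmetry class, or reduce to it by slicing $(\R^n)^N$ along the diagonal $O(n)$-orbits and the Minkowski-average directions. A secondary technical point is justifying the layer-cake reduction rigorously when the $f_i$ are merely bounded (as opposed to compactly supported), which is handled by a routine truncation and monotone-convergence argument; I would not belabor it. Everything else — the inclusion displayed above, the fact that the balls' common shape $rB$ is what makes intersections of translates Minkowski-superadditive, and the density of reflections generating all rotations — is standard.
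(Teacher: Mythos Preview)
Your approach is essentially the same as the paper's: define $F(x_1,\dots,x_N)=\phi\bigl(\medcap_i B(x_i,r_i)\bigr)$, verify that the superlevel set $G_s=\{F>s\}$ has the right structure, apply the Christ/Rogers/Brascamp--Lieb--Luttinger rearrangement inequality for \eqref{eqn:main_a}, then Kanter's theorem for \eqref{eqn:main_b}. The inclusion you display and its consequences via $(a)$, $(b)$, $(c)$ are exactly Lemma~\ref{lemma:concave} in the paper.

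Your one real confusion is the ``diagonal'' language, and it creates the obstacle you flag at the end. The inclusion
\[
  \medcap_{i=1}^N B\bigl((1-\lambda)x_i+\lambda y_i,\,r_i\bigr)\;\supseteq\;(1-\lambda)\medcap_{i=1}^N B(x_i,r_i)+\lambda\medcap_{i=1}^N B(y_i,r_i)
\]
holds for \emph{arbitrary} tuples $(x_i)$ and $(y_i)$, so together with $(a)$ and $(b)$ it makes $G_s$ a genuinely convex subset of $(\R^n)^N$, not merely convex along some diagonal. Likewise, invariance under the diagonal $O(n)$-action already includes $-\mathrm{Id}$, so $G_s$ is origin-symmetric in $(\R^n)^N$. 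Hence $\mathds{1}_{G_s}$ is the indicator of a symmetric convex set in $(\R^n)^N$, and Kanter's theorem (Theorem~\ref{thm:Kanter}) applies verbatim --- no adapted version is needed. The paper makes exactly this observation and applies Kanter directly, after using Lemma~\ref{lemma:peaked} to compare each $f_i^*$ with $a_i\mathds{1}_{b_iB}$. Two minor points: the layer-cake reduction is unnecessary since Theorem~\ref{thm:RBLLC} already handles general integrable $f_i$; and for part \eqref{eqn:main_b} with arbitrary $\|f_i\|_\infty$ the paper reduces to the case $\|f_i\|_\infty=1$ by a change of variables, which you should include.
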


As in \cite{PaoPiv_probtake}, \cite{PaoPiv_smallball}, we use the
rearrangement inequality of Rogers \cite{Rogers_single} and
Brascamp-Lieb-Luttinger \cite{BLL}; in particular, the following
variant due to Christ \cite{Christ_kplane}.

\begin{theorem}
  \label{thm:RBLLC}
  Let $F:(\R^n)^N=\otimes_{i=1}^N \R^n \rightarrow [0,\infty)$. Then
  \begin{multline}
    \label{eqn:GCC->max}
    \int_{(\R^n)^N} F(x_1,\ldots,x_N)\, f_1(x_1)\cdots f_N(x_N) \,
    dx_1 \ldots dx_N \\ \ls \int_{(\R^n)^N} F(x_1,\ldots,x_N)\,
    f_1^\ast (x_1)\cdots f_N^\ast (x_N) \, dx_1 \ldots dx_N
  \end{multline}
  holds for any integrable $f_1, \ldots, f_{N}:\R^n\to [0,\infty)$ whenever
  $F$ satisfies the following condition: for every $z\in S^{n-1}\subseteq
  \R^n$ and for every $Y=(y_1,\ldots,y_N)\subseteq (z^{\perp})^N\subseteq
  (\R^n)^N$, the function $F_{z,Y}:\R^N \to [0,\infty)$ defined by
  \begin{equation}
    \label{eqn:F_Y}
    F_{z,Y}(t):=F(y_1+t_1z, \ldots, y_N+t_N z). 
  \end{equation} 
  is even and quasi-concave. 
\end{theorem}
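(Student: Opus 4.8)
The plan is to deduce Theorem~\ref{thm:RBLLC} from the classical Brascamp--Lieb--Luttinger inequality \cite{BLL} for functions on the line, via a layer‑cake reduction to indicator functions followed by iterated one‑direction Steiner symmetrization.

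\emph{Step 1: reduction to indicators.} For each $s\gr 0$ the slice hypothesis is inherited by $\mathds{1}_{\{F>s\}}$, since its slices are the indicators of the sets $\{F_{z,Y}>s\}$, which are even and convex; similarly $(\mathds{1}_{\{f_i>r\}})^\ast=\mathds{1}_{\{f_i>r\}^\ast}$. Hence, writing the layer‑cake representations \eqref{eqn:layer_cake} of $F$ and of each $f_i$ and using Fubini together with the fact that $\int_0^\infty\mathds{1}_{\{f_i>r\}^\ast}\,dr=f_i^\ast$, it suffices to prove
\[
\vol{E\cap(A_1\times\cdots\times A_N)}\ls\vol{E\cap(A_1^\ast\times\cdots\times A_N^\ast)}
\]
whenever the $A_i\subseteq\R^n$ have finite measure and $E\subseteq(\R^n)^N$ has the property that every slice $E_{z,Y}:=\{t\in\R^N:(y_1+t_1z,\dots,y_N+t_Nz)\in E\}$ is symmetric and convex.

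\emph{Step 2: one direction at a time.} Fix $z\in S^{n-1}$; after a rotation take $z=e_n$ and write $x_i=(\bar x_i,u_i)\in\R^{n-1}\times\R$. Applying Fubini in the $u$‑variables, the statement ``replacing each $A_i$ by its Steiner symmetral $A_i^{\ast z}$ in the direction $z$ does not decrease $\vol{E\cap\prod_iA_i}$'' reduces to the fibrewise claim: for every fixed $\bar x=(\bar x_1,\dots,\bar x_N)$, with $K:=E_{z,Y}$ for $Y=((\bar x_1,0),\dots,(\bar x_N,0))\in(z^\perp)^N$ (so $K$ is symmetric and convex in $\R^N$) and $J_i:=\{u_i\in\R:(\bar x_i,u_i)\in A_i\}$,
\[
\vol{K\cap(J_1\times\cdots\times J_N)}\ls\vol{K\cap(J_1^\ast\times\cdots\times J_N^\ast)}.
\]
Here I would replace $K$ by $\overline K$ (no change in measure) and write the closed symmetric convex set as an intersection of symmetric slabs, so that $\mathds{1}_K(u)=\prod_k\mathds{1}_{[-c_k,c_k]}(\langle u,v_k\rangle)$, approximating by finitely many slabs and passing to the limit by dominated convergence (legitimate since $\mathds{1}_{J_1\times\cdots\times J_N}$ is integrable). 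Then $\mathds{1}_K(u)\prod_i\mathds{1}_{J_i}(u_i)$ is a product of functions of the linear forms $u\mapsto\langle u,v_k\rangle$ and $u\mapsto u_i$ on $\R^N$, so the Brascamp--Lieb--Luttinger inequality \cite{BLL} applies; since every $\mathds{1}_{[-c_k,c_k]}$ is already symmetric decreasing and $(\mathds{1}_{J_i})^\ast=\mathds{1}_{J_i^\ast}$, it gives precisely the displayed fibrewise inequality. Integrating back over $\bar x$ proves the one‑direction claim.

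\emph{Step 3: iterate.} Finally I would apply the one‑direction claim — \emph{to all $N$ sets simultaneously} — along a sequence of directions for which iterated Steiner symmetrization carries each of the finitely many sets $A_1,\dots,A_N$ to its symmetric decreasing rearrangement (cf.\ \cite{LL_book}), and pass to the limit, using that $A\mapsto\vol{E\cap(\cdots\times A\times\cdots)}$ is continuous under $L^1$‑convergence of the varying factor. This yields $\vol{E\cap\prod_iA_i}\ls\vol{E\cap\prod_iA_i^\ast}$ and hence the theorem. The only genuinely nontrivial input is the Brascamp--Lieb--Luttinger inequality, which I would use as a black box; everything else is bookkeeping, the one point requiring care being that the single‑direction step must symmetrize \emph{all} the $A_i$ at once — rearranging just one of them can strictly decrease the intersection with $K$, because a one‑dimensional section of $K$, though convex, need not be centred — which is exactly why the argument must funnel into the many‑function form of Brascamp--Lieb--Luttinger rather than a one‑function Riesz‑type inequality.
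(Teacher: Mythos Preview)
The paper does not prove Theorem~\ref{thm:RBLLC}: it is quoted as a known result of Christ \cite{Christ_kplane} (building on \cite{Rogers_single} and \cite{BLL}), and Remark~\ref{remark:Christ}(ii) merely sketches the standard proof---Steiner symmetrize each $f_i$ in a fixed direction, then iterate along suitable directions to reach the $f_i^*$. Your proposal is a correct fleshing-out of precisely that argument, including the reduction of the one-direction step to one-dimensional Brascamp--Lieb--Luttinger via the slab decomposition of the symmetric convex slice, so it matches the approach the paper points to.
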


\begin{remark}
  \label{remark:Christ}\begin{itemize}
    \item[(i)] When $n=1$, the condition on $F$ in the latter theorem
      reduces to $F:\R^N\rightarrow [0,\infty)$ being even and
      quasi-concave.
      \item[(ii)] The proof of the latter theorem relies on the fact
        that such integrals are increased when the $f_i$'s are
        replaced by their Steiner symmetrals
        $f_i^*(\cdot|\theta)$. When repeated in suitable directions
        $\theta$, they yield the symmetric decreasing rearrangements
        $f_i^{*}$. We refer the reader to \cite{Christ_kplane} or
        \cite{PaoPiv_probtake}, \cite{CFPP} for the details.
        \end{itemize}
\end{remark} 

We also combine the latter with a theorem of Kanter \cite[Corollary
  3.2]{Kanter}.  If $f$ and $g$ are probability densities on $\R^n$
such that $\int_{K}f(x)dx \ls \int_K g(x) dx$ for every symmetric
convex set $K\subseteq \R^n$, we will use similar terminology as that
used in \cite{Barthe_unimodal}, \cite{Barthe_central},
\cite{Ball_Handbook} and say that $f$ is less peaked than $g$ (here,
as above, `symmetric' means `origin-symmetric'). Furthermore, we say
that $f$ is unimodal if it is quasi-concave and even.  Kanter uses a
more general notion of unimodality but his result applies to the
condition we use here; see the discussion in \cite[\S 5]{Kanter}.

\begin{theorem}
  \label{thm:Kanter}
  Let $n, N\gr 1$. Let $f_1,\ldots,f_N$ and $g_1,\ldots,g_N$ be
  unimodal probability densities on $\R^n$.  Assume that $f_i$ is less
  peaked than $g_i$ for each $i=1,\ldots,N$. Then $\medprod_{i=1}^nf_i
  $ is less peaked than $\medprod_{i=1}^n g_i$.
\end{theorem}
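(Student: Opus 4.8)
The plan is to prove this directly in the stated form (``unimodal'' $=$ quasi-concave and even, ``less peaked'' as defined above), exploiting the fact that \emph{this} notion of unimodality makes every such density a genuine mixture of uniform densities on symmetric convex sets. By the layer-cake formula \eqref{eqn:layer_cake}, any quasi-concave even probability density $f$ on $\R^d$ satisfies $f=\int_0^\infty \mathbf 1_{\{f>s\}}\,ds$, where each superlevel set $\{f>s\}$ is convex (by quasi-concavity), origin-symmetric (by evenness), and bounded (since $f$ is integrable); writing $d\mu(s):=|\{f>s\}|\,ds$, which is a probability measure because $\int_0^\infty|\{f>s\}|\,ds=\int f=1$, we get $f=\int u_C\,d\mu$, an honest mixture of uniform densities $u_C:=\mathbf 1_C/|C|$ on symmetric convex sets. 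Since $u_C\otimes u_{C'}=u_{C\times C'}$ and $C\times C'$ is symmetric and convex whenever $C,C'$ are, a tensor product of finitely many such mixtures is again a mixture of uniform densities on symmetric convex sets; in particular every product $\prod_{i\in I}h_i$ of quasi-concave even densities has this form, even though that product is typically not quasi-concave — which is exactly why one cannot induct naively inside the class of quasi-concave densities.

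The crux is a ``one extra factor'' lemma: if $f\preceq g$ are probability densities on $\R^a$ (meaning $\int_C f\le\int_C g$ for every symmetric convex $C\subseteq\R^a$) and $D\subseteq\R^b$ is symmetric and convex with $0<|D|<\infty$, then $f\otimes u_D\preceq g\otimes u_D$ on $\R^{a+b}$. To prove it, fix a symmetric convex $K\subseteq\R^{a+b}$ and put $M:=K\cap(\R^a\times D)$, which is again symmetric and convex. By Fubini, $|D|\int_K f(x)u_D(y)\,dx\,dy=\int_{\R^a}f(x)\,\psi(x)\,dx$ with $\psi(x):=\bigl|\{y:(x,y)\in M\}\bigr|$. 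The section function $\psi$ is even (symmetry of $M$ forces $M_{-x}=-M_x$) and quasi-concave: by Brunn's theorem / the Brunn--Minkowski inequality — equivalently Pr\'{e}kopa's theorem applied to the log-concave function $\mathbf 1_M$ — the map $x\mapsto\psi(x)^{1/b}$ is concave on the convex set where the sections are nonempty. Hence $\psi$ is a non-negative quasi-concave even function, so by \eqref{eqn:layer_cake} its superlevel sets $\{\psi>s\}$ are symmetric convex sets, and therefore $\int f\psi=\int_0^\infty\bigl(\int_{\{\psi>s\}}f\bigr)\,ds\le\int_0^\infty\bigl(\int_{\{\psi>s\}}g\bigr)\,ds=\int g\psi$ by $f\preceq g$. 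This gives $\int_K f\otimes u_D\le\int_K g\otimes u_D$, as claimed. Integrating this against the mixing measure of an arbitrary mixture $P=\int u_{D_t}\,d\nu(t)$ on $\R^b$ upgrades it to: for any densities $f\preceq g$ on $\R^a$ and any $P$ of that form, $f\otimes P\preceq g\otimes P$.

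To conclude, after relabelling coordinate blocks — the relation $\preceq$ is invariant under permutations of the $N$ copies of $\R^n$, since these are linear isometries carrying symmetric convex sets to symmetric convex sets — I change one factor at a time:
\[\prod_{i=1}^N f_i\ \preceq\ g_1\prod_{i=2}^N f_i\ \preceq\ g_1 g_2\prod_{i=3}^N f_i\ \preceq\ \cdots\ \preceq\ \prod_{i=1}^N g_i,\]
where in the $k$-th step the factor being changed is $f_k$ (and $f_k\preceq g_k$ by hypothesis), while the product $P:=g_1\cdots g_{k-1}f_{k+1}\cdots f_N$ over the remaining coordinates is, by the first paragraph, a mixture of uniform densities on symmetric convex sets, so the lemma of the second paragraph gives $P\otimes f_k\preceq P\otimes g_k$. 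Chaining these $N$ relations and using transitivity of $\preceq$ yields $\prod_{i=1}^N f_i\preceq\prod_{i=1}^N g_i$.

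The only genuinely delicate point is the lemma of the second paragraph, and within it the observation that replacing $K$ by the slab intersection $M$ turns the (in general non-symmetric) fibres $K_y$ into the quasi-concave and even weight $\psi(x)=|M_x|$, against which the hypothesis $f\preceq g$ can at last be applied; everything else is bookkeeping with Fubini and \eqref{eqn:layer_cake}. I would also stress that the induction has to be carried out in the class of mixtures of uniform densities on symmetric convex sets rather than in the class of quasi-concave densities — the latter is not closed under products — which is the purpose of the first paragraph.
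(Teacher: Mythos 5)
Your proof is correct. Note, though, that the paper does not prove this statement at all: it is quoted as Kanter's theorem, citing \cite[Corollary 3.2]{Kanter}, with only a remark that Kanter's more general notion of unimodality covers the quasi-concave-and-even case used here. What you have written is essentially a self-contained reconstruction of Kanter's own argument: the layer-cake representation of a quasi-concave even density as a mixture of uniform densities on symmetric convex sets (this is exactly the bridge between the paper's definition of unimodality and Kanter's, which the paper leaves to the reader), the one-extra-factor lemma proved by replacing $K$ with $M=K\cap(\R^a\times D)$ and invoking Brunn--Minkowski to see that the section-volume weight $\psi(x)=|M_x|$ is even and quasi-concave, and then a block-by-block exchange $f_k\mapsto g_k$ using permutation invariance and transitivity of the peakedness order. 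All the steps check out: the superlevel sets $\{f>s\}$, $s>0$, do have finite volume by integrability (so the mixing measure $|\{f>s\}|\,ds$ is well defined, with the degenerate levels carrying no mass), the upgrade from a single uniform factor $u_D$ to an arbitrary mixture $P$ is a legitimate Tonelli interchange, and the paper's definition of ``less peaked'' quantifies over all symmetric convex sets, so applying it to the possibly unbounded sets $\{\psi>s\}$ needs no extra approximation. The only payoff difference is expository: your argument makes the paper's black-box citation self-contained and makes explicit why only the \emph{unchanged} factors need to be unimodal (the pair $f_k\preceq g_k$ being exchanged requires no unimodality), a point that is invisible in the citation.
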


We will also use the following basic lemma (it can be proved using,
e.g., \cite[Lemma 4.3]{CFPP}).

\begin{lemma}
  \label{lemma:peaked}
  Any radial probability density on $\R^n$, $n\gr 1$, that is bounded
  by one is less peaked than $\mathds{1}_{B(0,r_n)}$ where $r_n$
  satisfies $\abs{B(0,r_n)} = 1$; in particular, taking $n=1$, any
  even probability density on $\R$ that is bounded by one is less
  peaked than $\mathds{1}_{[-1/2,1/2]}$.
\end{lemma}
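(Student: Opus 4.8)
\textbf{Proof plan for Lemma \ref{lemma:peaked}.}
The plan is to reduce the claim to a one-dimensional statement by slicing, and then to prove the one-dimensional statement by a direct rearrangement-type argument. First I would observe that it suffices to prove the following: if $f$ is a radial probability density on $\R^n$ bounded by one and $K$ is a symmetric convex set, then $\int_K f \ls \int_K \mathds{1}_{B(0,r_n)}$. Since $f$ is radial and bounded by one, it equimeasures with a decreasing radial profile; writing $f(x) = \psi(\abs{x})$ with $\psi$ nonincreasing (one may assume this after a measure-preserving adjustment, or simply work with $f^\ast = f$), the key point is that among all radial densities bounded by one with a given total mass, $\mathds{1}_{B(0,r_n)}$ puts the most mass near the origin: its superlevel sets are the smallest balls possible subject to the height bound. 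Concretely, for every $t>0$ the set $\{f>t\}$ is a ball (or empty) of some radius $\rho(t)$, and $\{\mathds{1}_{B(0,r_n)}>t\}$ is $B(0,r_n)$ for $t<1$ and empty for $t\gr 1$; the height constraint $f\ls 1$ forces $\abs{\{f>t\}}$ to integrate (in $t$ over $[0,1]$) to $1 = \abs{B(0,r_n)}$, so $\{f>t\}\subseteq B(0,r_n)$ cannot hold for all $t$ unless $f=\mathds{1}_{B(0,r_n)}$, but a compensation argument still works.

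The cleaner route, and the one I would actually write, is via the layer-cake formula. Using \eqref{eqn:layer_cake}, $\int_K f = \int_0^\infty \abs{K\cap\{f>t\}}\,dt$ and $\int_K \mathds{1}_{B(0,r_n)} = \int_0^1 \abs{K\cap B(0,r_n)}\,dt$. Both $\{f>t\}$ and $B(0,r_n)$ are origin-symmetric balls, as is the intersection pattern with the symmetric convex body $K$; so it is enough to compare $\abs{K\cap rB}$ as a function of $r$ together with the constraint on how the radii $\rho(t)$ are distributed. Here the constraint $f\ls 1$ gives $\int_0^\infty \abs{\{f>t\}}\,dt = \norm{f}_1 = 1 = \abs{B(0,r_n)}$ and $\{f>t\}=\emptyset$ for $t>1$; moreover $\abs{\{f>t\}}$ is nonincreasing in $t$. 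Since $r\mapsto \abs{K\cap rB}$ is nondecreasing and concave-like in the appropriate sense (it is a nondecreasing function of $r$, and $\abs{K\cap rB}/\abs{rB}$ is nonincreasing), a standard Hardy–Littlewood / majorization comparison between the distribution of radii of $\{f>t\}$ and the (degenerate) distribution concentrated at $r_n$ yields $\int_0^1\abs{K\cap\{f>t\}}\,dt \ls \int_0^1 \abs{K\cap B(0,r_n)}\,dt$. This is exactly the inequality \cite[Lemma 4.3]{CFPP} is designed to deliver, so in the write-up I would simply invoke that lemma after casting the statement in its hypotheses; alternatively one can quote the fact that $\mathds{1}_{B(0,r_n)}$ is the most peaked radial density bounded by one, which is precisely the shape of that lemma.

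For the one-dimensional case I would either specialize $n=1$ in the above (noting that for $n=1$ ``radial'' means ``even'' and $r_1$ with $\abs{B(0,r_1)}=1$ gives $B(0,r_1)=[-1/2,1/2]$), or give the two-line direct argument: for a symmetric convex set $K\subseteq\R$, i.e.\ $K=[-a,a]$, we have $\int_K f = \int_{-a}^a f \ls \min(2a,1)$ because $f\ls 1$ and $\int_\R f = 1$, while $\int_{-a}^a \mathds{1}_{[-1/2,1/2]} = \min(2a,1)$, so $\int_K f \ls \int_K \mathds{1}_{[-1/2,1/2]}$, which is the definition of ``less peaked.''

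\medskip

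\emph{Main obstacle.} The only real content is the $n$-dimensional radial comparison; the one-dimensional case is trivial as shown. The potential friction point is making precise the majorization step that compares the spread-out family of balls $\{f>t\}$, $t\in(0,1)$, of total volume $1$ against the single ball $B(0,r_n)$ of volume $1$ — one needs that the density $f$ being bounded by one is exactly what controls this spreading, and that the test functional $r\mapsto \abs{K\cap rB}$ interacts with it monotonically. This is routine given \cite[Lemma 4.3]{CFPP}, and I expect the proof to consist mostly of checking its hypotheses rather than any new argument.
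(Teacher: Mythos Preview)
Your proposal is correct and matches the paper's approach: the paper gives no proof of this lemma beyond the parenthetical remark that it ``can be proved using, e.g., \cite[Lemma 4.3]{CFPP},'' and you arrive at exactly the same citation after sketching the layer-cake/Jensen mechanism behind it. Your direct two-line argument for $n=1$ is a clean addition; the only minor imprecision is that ``$\abs{K\cap rB}/\abs{rB}$ nonincreasing'' is star-shapedness rather than concavity of $v\mapsto\abs{K\cap B_v}$, but the latter (which is what the Jensen step actually needs) also holds since $K$ is star-shaped about the origin, and in any case you defer the formal argument to \cite[Lemma 4.3]{CFPP} just as the paper does.
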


The requisite quasi-concavity needed to apply Theorem \ref{thm:RBLLC}
is a consequence of the following lemma.
\begin{lemma}
  \label{lemma:concave}
  Let $N,n\gr 1$ and $r_1,\ldots, r_N\in (0,\infty)$. Assume that
  $\phi:\mathcal{K}^n\rightarrow [0,\infty)$ satisfies $(a)$ and $(b)$
    and $\phi(K)=\phi(-K)$ for each $K\in \mathcal{K}^n$.  Set
  \begin{equation*}
    F(x_1,\ldots,x_N) = \phi\left(\medcap_{i=1}^N B(x_i,r_i)\right).
  \end{equation*} 
  Then $F$ is even and quasi-concave on its support.  Additionally,
  assume that $\phi$ satisfies condition $(c)$. If $z \in S^{n-1}$ and
  $y_1,\ldots, y_N\in z^{\perp}$ and $F_{z,Y}:\R^N\rightarrow
  [0,\infty)$ is defined by
  \begin{equation*}
    F_{z,Y}(t):=\phi\left(\medcap_{i=1}^N B(y_i+t_i z,r_i)\right),
  \end{equation*}then $F_{z,Y}$ is even and quasi-concave on its support. 
\end{lemma}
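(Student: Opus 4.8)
The plan is to deduce everything from the single elementary identity $B(x,r)=x+rB$ together with convexity of $rB$, which gives
\[
  (1-\lambda)B(x,r)+\lambda B(y,r)=B\bigl((1-\lambda)x+\lambda y,\,r\bigr)\qquad(\lambda\in[0,1]).
\]
Write $S$ for the support of $F$, i.e.\ the set of $(x_1,\dots,x_N)\in(\R^n)^N$ for which $\bigcap_{i=1}^N B(x_i,r_i)\in\mathcal{K}^n$. The first step is to check that $S$ is convex: given $x=(x_i)$, $y=(y_i)\in S$, set $C=\bigcap_i B(x_i,r_i)$ and $D=\bigcap_i B(y_i,r_i)$; since $C\subseteq B(x_i,r_i)$ and $D\subseteq B(y_i,r_i)$ for every $i$, the displayed identity yields $(1-\lambda)C+\lambda D\subseteq\bigcap_{i} B((1-\lambda)x_i+\lambda y_i,r_i)$. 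The left-hand side is a convex body, hence so is the larger intersection on the right, and therefore $(1-\lambda)x+\lambda y\in S$.

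Next I would obtain quasi-concavity of $F$ on $S$ by chaining this inclusion with hypothesis $(b)$ and then $(a)$:
\[
  F\bigl((1-\lambda)x+\lambda y\bigr)=\phi\Bigl(\bigcap_i B((1-\lambda)x_i+\lambda y_i,r_i)\Bigr)\ \gr\ \phi\bigl((1-\lambda)C+\lambda D\bigr)\ \gr\ \min\bigl(\phi(C),\phi(D)\bigr),
\]
which equals $\min(F(x),F(y))$; thus every super-level set $\{F>s\}$ is convex. Evenness of $F$ is immediate: $B(-x_i,r_i)=-B(x_i,r_i)$ and taking intersections commutes with $K\mapsto-K$, so $\bigcap_i B(-x_i,r_i)=-\bigcap_i B(x_i,r_i)$, and the hypothesis $\phi(K)=\phi(-K)$ then gives $F(-x)=F(x)$ (note $-x\in S$ iff $x\in S$).

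Finally, for the slice $F_{z,Y}$ with $z\in S^{n-1}$ and $y_1,\dots,y_N\in z^\perp$: the map $t\mapsto(y_1+t_1z,\dots,y_N+t_Nz)$ is affine from $\R^N$ into $(\R^n)^N$, so $F_{z,Y}$ is the composition of $F$ with this map; its support is the preimage of $S$, hence convex, and quasi-concavity is preserved because preimages of convex sets under affine maps are convex. For evenness of $F_{z,Y}$ I would invoke hypothesis $(c)$: letting $R_z$ be the reflection in $z^\perp$, we have $R_z(y_i+t_iz)=y_i-t_iz$ since $y_i\in z^\perp$, and, $R_z$ being orthogonal, $R_z B(p,r)=B(R_zp,r)$; hence $\bigcap_i B(y_i-t_iz,r_i)=R_z\bigl(\bigcap_i B(y_i+t_iz,r_i)\bigr)$, and rotation-invariance gives $F_{z,Y}(-t)=F_{z,Y}(t)$. (Incidentally, $(c)$ applied with $U=-I$ already forces $\phi(-K)=\phi(K)$, so the separate symmetry assumption is needed only in the first part, where $(c)$ is not imposed.) There is no serious obstacle here; the whole argument is routine once the Minkowski-sum identity for balls is in hand. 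The only point requiring care is the bookkeeping of when the various intersections are genuine convex bodies (have nonempty interior), so that $\phi$ is defined on them and conditions $(a)$--$(c)$ apply — which is precisely why the statement, and the argument, are phrased ``on the support.''
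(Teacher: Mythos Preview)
Your proof is correct and follows essentially the same approach as the paper: the key inclusion $(1-\lambda)\bigcap_i B(x_i,r_i)+\lambda\bigcap_i B(y_i,r_i)\subseteq\bigcap_i B((1-\lambda)x_i+\lambda y_i,r_i)$ (derived from $B(x,r)=x+rB$, equivalently from the triangle inequality as the paper writes it) is combined with $(b)$ and $(a)$ for quasi-concavity, and the reflection $R_z$ together with $(c)$ gives the evenness of $F_{z,Y}$. Your version is marginally tidier in that you treat general $\lambda$ rather than just the midpoint case and phrase the quasi-concavity of $F_{z,Y}$ as preservation under an affine map, but there is no substantive difference.
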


\begin{proof}
  The function $F$ is clearly even on $(\R^n)^N$.  For the
  quasi-concavity claim, let ${\bf u} =(u_1,\ldots,u_N)\in (\R^n)^N$
  and ${\bf v} = (v_1,\ldots,v_N)\in (\R^n)^N$ belong to the support
  of $F$.  We will first show that
  \begin{eqnarray*}
    \medcap_{i=1}^N
      B\left(\frac{u_i+v_i}{2}, r_i\right)
    \supseteq \frac{1}{2}\medcap_{i=1}^N B(u_i, r_i) +
    \frac{1}{2}\medcap_{i=1}^N B(v_i, r_i).
  \end{eqnarray*}
  Let $w_1,w_2\in \R^n$ and assume $\abs{w_1-u_i}\ls r_i$ and
  $\abs{w_2-v_i}\ls r_i$ for $i=1,\ldots,N$.  Then for
  $i=1,\ldots,N$,
  \begin{eqnarray*}
    \left\lvert\frac{w_1+w_2}{2}-
      \left(\frac{u_i+v_i}{2}\right)\right\rvert
     \ls \frac{1}{2}\abs{w_1-u_i}+\frac{1}{2}\abs{w_2-v_i}
      \ls r_i,
  \end{eqnarray*}
 which shows the inclusion. By monotonicity and quasi-concavity of
 $\phi$, we have
  \begin{eqnarray*}
    F(({\bf u} +{\bf v})/2) & = & \phi\left(\medcap_{i=1}^N
    B\left(\frac{u_i +v_i}{2},r_i\right)\right) \\ &
    \gr & \phi\left( \frac{1}{2}\medcap_{i=1}^N B(u_i, r_i) +
    \frac{1}{2}\medcap_{i=1}^N B(v_i, r_i)\right)\\ &\gr &
    \min\left(\phi\left(\medcap_{i=1}^N B(u_i, r_i)\right),
    \phi\left(\medcap_{i=1}^N B(v_i, r_i)\right)\right)\\ &
    = & \min(F({\bf u}),F({\bf v})).
  \end{eqnarray*}
  Therefore, $F$ is quasi-concave on its support.  

  The second quasi-concavity claim follows from the fact that the
  restriction of a quasi-concave function to a line is itself
  quasi-concave. Finally, let $z\in S^{n-1}$ and $y_1,\ldots,y_N\in
  z^{\perp}$. Let $R_{z}$ denote the reflection about
  $z^{\perp}$. Then
  \begin{eqnarray*}
    R_{z}\left(\medcap_{i=1}^N B(y_i+t_iz, r_i)\right) &= &
    \medcap_{i=1}^N R_{z} (r_i B(0,1) +(y_i+t_iz)) \\ & = & 
    \medcap_{i=1}^N \left( r_i B(0,1) +
    (y_i -t_i z)\right) \\ &=& \medcap_{i=1}^N B(y_i-t_iz, r_i).
  \end{eqnarray*} Since $\phi$ satisfies $(c)$, we have 
  \begin{eqnarray*}
    F_{z,Y}(t)=\phi\left(\medcap_{i=1}^N B(y_i+t_iz,
    r_i)\right)=\phi\left(\medcap_{i=1}^N B(y_i-t_iz, r_i)\right)
     = F_{z,Y}(-t).
  \end{eqnarray*}
\end{proof}

\begin{remark}
  \label{remark:ball_even}
  In the previous lemma, the use of Euclidean balls is not important
  to obtain quasi-concavity of $F$; one can also take intersections of
  translates of other convex bodies. However, to obtain the evenness
  condition on $F_{z,Y}$ it is essential that we use Euclidean
  balls. Thus ball-polyhedra interface well with the rearrangement
  techniques used here, as the next proof shows.

\end{remark}

\begin{proof}[Proof of Theorem \ref{thm:main}]
  Let $F$ be as in Lemma \ref{lemma:concave}.  For $s>0$, set
  $H=\mathds{1}_{\{F>s\}}$. Let $z\in S^{n-1}$ and
  $Y=(y_1,\ldots,y_N)\in (z^{\perp})^N$. Let
  $F_{z,Y}(t_1,\ldots,t_N)=F(y_1+t_1 z,\ldots,y_N+t_N z)$ and
  $H_{z,Y}(t_1,\ldots,t_N) =
  \mathds{1}_{\{F_{z,Y}>s\}}(t_1,\dots,t_N)$.  By Lemma
  \ref{lemma:concave}, $F_{z,Y}$ is an even, quasi-concave
  function. It follows that $H_{z,Y}$ is even and
  quasi-concave. Therefore we can apply Theorem \ref{thm:RBLLC} to
  obtain
  \begin{eqnarray*}
    \lefteqn{\Prob{\phi\left(\medcap_{i=1}^N B(X_i,r_i)\right)>s}} \\ &
    & =\int_{\R^n} \ldots \int_{\R^n} H(x_1,\ldots,x_N) \medprod_{i=1}^N
    f_i(x_i) dx_1\ldots dx_N \\
    & &\ls \int_{\R^n} \ldots \int_{\R^n} H(x_1,\ldots,x_N) \medprod_{i=1}^N
    f^*_i(x_i) dx_1\ldots dx_N\\
    & & =\Prob{\phi\left(\medcap_{i=1}^N B(X^*_i,r_i)\right)>s},
  \end{eqnarray*}which proves \eqref{eqn:main_a}.
  
  We will first prove \eqref{eqn:main_b} under the additional
  assumption that $\norm{f_i}_{\infty} = 1$ for $i=1,\ldots,
  N$. Furthermore, by the first part of the proof we may assume that
  each $f_i$ is radially symmetric and radially decreasing, hence
  unimodal.  By Lemma \ref{lemma:peaked}, $f_i$ is less peaked than
  $\mathds{1}_{B(0,r_n)}$. Since $H=\mathds{1}_{\{F>s\}}$ is the
  indicator function of a symmetric convex set in $(\R^n)^N$, Theorem
  \ref{thm:Kanter} yields
  \begin{eqnarray*}
    \lefteqn{\Prob{\phi\left(\medcap_{i=1}^N B(X_i,r_i)\right)>s}} \\ &
    & =\int_{\R^n} \ldots \int_{\R^n} H(x_1,\ldots,x_N) \medprod_{i=1}^N
    f_i(x_i) dx_1\ldots dx_N \\
    & &\ls \int_{\R^n} \ldots \int_{\R^n} H(x_1,\ldots,x_N) \medprod_{i=1}^N
    \mathds{1}_{B(0,r_n)}(x_i) dx_1\ldots dx_N\\
    & & =\Prob{\phi\left(\medcap_{i=1}^N B(Z_i,r_i)\right)>s}.
  \end{eqnarray*}
  The general case follows by a change of variables; note that we make
  no assumption of homogeneity of $\phi$ in the following
  argument. For $i=1,\ldots,N$, let $c_i =\norm{f_i}_{\infty}^{-1/n}$
  and set
  \begin{equation*}
    \bar{f}_i(x) = \frac{f_i(c_i x)}{\int_{\R^n}f_i(c_i y)dy}
    =\frac{f_i(c_ix)}{\norm{f_i}_{\infty}}.
  \end{equation*} 
  Then $\norm{\bar{f}_i}_1 = \norm{\bar{f}_i}_{\infty} =1$ for
  $i=1,\ldots,N$.  We apply what we just proved with $\bar{f}_1,
  \bar{f}_2, \ldots, \bar{f}_N$ and $H(c_1\cdot,\ldots, c_N\cdot)$
  (which remains the indicator of a symmetric convex set)
  \begin{eqnarray*}
    \lefteqn{\Prob{\phi\left(\medcap_{i=1}^N B(X_i,r_i)\right)>s}} \\ & & =
    \lefteqn{\int_{\R^n}\ldots\int_{\R^n} H(x_1,\ldots,x_N)
      \medprod_{i=1}^n f_i(x_i) dx_1\ldots dx_N} \\ & & = \medprod_{i=1}^N
    \norm{f_i}_{\infty} \int_{\R^n}\ldots\int_{\R^n} H(c_1
    y_1,\ldots,c_n y_N) \medprod_{i=1}^N\frac{c_i^n f_i(c_i
      y_i)}{\norm{f_i}_{\infty}} dy_1\ldots dy_N \\ & & =
    \int_{\R^n}\ldots\int_{\R^n} H(c_1 y_1,\ldots,c_n y_N)
    \medprod_{i=1}^N \bar{f}_i(y_i) dy_1\ldots dy_N \\ & & \ls
    \int_{\R^n}\ldots\int_{\R^n} H(c_1 y_1,\ldots,c_n y_N)
    \medprod_{i=1}^N \mathds{1}_{r_nB}(y_i) dy_1\ldots dy_N \\ & & =
    \int_{\R^n}\ldots\int_{\R^n} H(x_1,\ldots,x_N) \medprod_{i=1}^N
    \norm{f_i}_{\infty} \mathds{1}_{c_i r_n B}(x_i) dx_1\ldots dx_N\\
    & & = \Prob{\phi\left(\medcap_{i=1}^N B(Z_i,r_i)\right)>s}, 
  \end{eqnarray*} where, as above, $r_n = \omega_n^{-1/n}$.
  This proves \eqref{eqn:main_b} as claimed with $b_i= c_i r_n$, for
  $i=1,\ldots, N$.
\end{proof}

Now we turn to a generalization of Theorem \ref{thm:cube}.

\begin{theorem}
  \label{thm:main_cube}
  Let $N,n\gr 1$ and $r_1,\ldots, r_N\in (0,\infty)$. Assume that
  $\phi:\mathcal{K}^n\rightarrow [0,\infty)$ satisfies $(a)$ and
    $(b)$. Let $h_1,\ldots,h_N$ be probability densities on $\R^n$
    with $h_i(x)= \medprod_{j=1}^n h_{ij}(x_j)$ and each $h_{ij}$ is a
    probability density on $\R$ that is bounded by one. Consider
    independent random vectors $X_1,\ldots, X_N$ and $Y_1,\ldots,Y_N$
    such that $X_i$ is distributed according to $h_i$ and $Y_i$
    according to $\mathds{1}_{Q_n}$, for $i=1,\ldots,N$.  Then for any
    $s\gr 0$,
    \begin{equation}
      \label{eqn:main_cube}
    \Prob{\phi\left(\medcap_{i=1}^N B(X_i,r_i)\right)> s}\ls
    \Prob{\phi\left(\medcap_{i=1}^N B(Y_i,r_i)\right)> s}.
  \end{equation}
\end{theorem}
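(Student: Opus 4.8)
The plan is to mimic the proof of Theorem \ref{thm:main}, but working one coordinate at a time rather than in $\R^n$ all at once, so that the one-dimensional case of Kanter's theorem together with the one-dimensional case of Lemma \ref{lemma:peaked} can be applied coordinate by coordinate. Set
\begin{equation*}
  F(x_1,\ldots,x_N) = \phi\left(\medcap_{i=1}^N B(x_i,r_i)\right),
\end{equation*}
and for $s>0$ put $H = \mathds{1}_{\{F>s\}}$. By Lemma \ref{lemma:concave} (applied with the hypothesis that $\phi(K)=\phi(-K)$, which follows from $(c)$ — or, if one wants to avoid assuming $(c)$, one checks directly that $F$ is even since $B(-x_i,r_i) = -B(x_i,r_i)$ and $\phi(K)=\phi(-K)$), $F$ is even and quasi-concave on its support, so $\{F>s\}$ is a symmetric convex subset of $(\R^n)^N$, and hence $H$ is the indicator of a symmetric convex set. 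We must show
\begin{equation*}
  \int H(x_1,\ldots,x_N)\medprod_{i=1}^N h_i(x_i)\,dx_1\cdots dx_N \ls \int H(x_1,\ldots,x_N)\medprod_{i=1}^N \mathds{1}_{Q_n}(x_i)\,dx_1\cdots dx_N.
\end{equation*}

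First I would reduce to the case where each factor $h_{ij}$ has $\|h_{ij}\|_\infty = 1$ by the same change-of-variables argument as at the end of the proof of Theorem \ref{thm:main}: replacing $h_{ij}$ by $\bar h_{ij}(t) = h_{ij}(c_{ij}t)/\|h_{ij}\|_\infty$ with $c_{ij} = \|h_{ij}\|_\infty^{-1}$ rescales the $j$-th coordinate of the $i$-th variable and turns $H$ into the indicator of another symmetric convex set (symmetry and convexity are preserved by any diagonal linear map); the extremizer $\mathds{1}_{Q_n}$ then gets replaced by a product of uniform densities on intervals of length $c_{ij}$, and the factors $\|h_{ij}\|_\infty$ that come out cancel exactly as before. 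So assume $\|h_{ij}\|_\infty = 1$ for all $i,j$. Next I would rearrange in each direction: for a fixed coordinate index $j\in\{1,\ldots,n\}$, apply Steiner symmetrization in the direction $e_j$ to each of the $N$ densities $h_i$. Since $h_i$ is a product of one-dimensional densities, its Steiner symmetral in direction $e_j$ is obtained by replacing the single factor $h_{ij}$ with its one-dimensional symmetric decreasing rearrangement $h_{ij}^*$, which is an even quasi-concave density still bounded by one; by Remark \ref{remark:Christ}(ii) and Theorem \ref{thm:RBLLC} (in the direction $z=e_j$, using that $F$, hence $H$, restricted to any line parallel to $e_j$ is even and quasi-concave — this is exactly the second part of Lemma \ref{lemma:concave}), the integral of $H$ against the product only increases. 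Doing this for $j=1,\ldots,n$ in turn, I may assume each $h_{ij}$ is even, quasi-concave (i.e. unimodal) and bounded by one.

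Now comes the core step, which is also where I expect the only real subtlety. With each $h_{ij}$ unimodal and bounded by one, Lemma \ref{lemma:peaked} (the $n=1$ case) says $h_{ij}$ is less peaked than $\mathds{1}_{[-1/2,1/2]}$. I want to conclude that $\medprod_{i=1}^N h_i = \medprod_{i=1}^N\medprod_{j=1}^n h_{ij}$ is less peaked than $\medprod_{i=1}^N\mathds{1}_{Q_n} = \medprod_{i=1}^N\medprod_{j=1}^n \mathds{1}_{[-1/2,1/2]}$, viewing both as densities on $(\R^n)^N \cong \R^{nN}$: this is a single application of Theorem \ref{thm:Kanter} to the $nN$ one-dimensional unimodal densities $\{h_{ij}\}$ and $\{\mathds{1}_{[-1/2,1/2]}\}$, since $\mathds{1}_{Q_n}$ is literally the product of $n$ copies of $\mathds{1}_{[-1/2,1/2]}$ and Kanter's theorem is stated for products of unimodal densities on $\R$. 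Once "less peaked" is in hand, integrating against $H = \mathds{1}_{\{F>s\}}$, the indicator of a symmetric convex set in $\R^{nN}$, gives precisely
\begin{equation*}
  \Prob{\phi\left(\medcap_{i=1}^N B(X_i,r_i)\right)>s} \ls \Prob{\phi\left(\medcap_{i=1}^N B(Y_i,r_i)\right)>s},
\end{equation*}
which, chained with the rearrangement inequalities of the previous paragraph and the change of variables, is \eqref{eqn:main_cube}. The main obstacle is bookkeeping: one must be careful that the various reductions (rescaling coordinates, Steiner symmetrizing in each $e_j$) each preserve the two structural facts that make the argument run — that $H$ is the indicator of a symmetric convex set, and that $H$ restricted to lines parallel to the coordinate axes is even — and that the product structure $h_i = \prod_j h_{ij}$ is preserved under these operations so that Kanter's theorem applies at the end with one-dimensional factors. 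None of these is deep, but the interplay is the part that needs care; everything else is a direct transcription of the proof of Theorem \ref{thm:main}.
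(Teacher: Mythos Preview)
Your overall strategy---symmetric rearrangement of the one-dimensional factors, then Kanter's theorem on the $nN$ one-dimensional unimodal densities---is exactly the paper's. Two points of execution differ, and one of them matters.

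The change-of-variables reduction to $\norm{h_{ij}}_\infty = 1$ is unnecessary: the hypothesis already gives $\norm{h_{ij}}_\infty \ls 1$, and the $n=1$ case of Lemma~\ref{lemma:peaked} applies directly to any even density bounded by one, so the paper skips this step entirely.

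More importantly, the paper does \emph{not} perform the rearrangement by Steiner-symmetrizing in each $\R^n$-direction $e_j$ via the second part of Lemma~\ref{lemma:concave}. That part of the lemma assumes condition (c), which Theorem~\ref{thm:main_cube} does not. Your parenthetical---that ``$F$ restricted to any line parallel to $e_j$ is even''---is neither what the lemma asserts (it gives evenness of $F_{z,Y}$ on $\R^N$ when all $y_i\in z^\perp$) nor true in general without rotation-invariance: the evenness of $F_{e_j,Y}$ comes precisely from $\phi\circ R_{e_j}=\phi$. So as written, your rearrangement step has a gap under the hypotheses (a) and (b) alone.

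The paper's route is to view $F$ directly as an even quasi-concave function on $\R^{nN}$ (first part of Lemma~\ref{lemma:concave}, which needs only $\phi(-K)=\phi(K)$---an implicit assumption you rightly flag) and to apply the one-dimensional case of Theorem~\ref{thm:RBLLC} via Remark~\ref{remark:Christ}(i). This replaces all $nN$ factors $h_{ij}$ by $h_{ij}^*$ in one stroke, with no appeal to (c). Kanter then goes through exactly as you describe. So the only repair needed is to swap your direction-by-direction symmetrization in $\R^n$ for the single application of Remark~\ref{remark:Christ}(i) in $\R^{nN}$.
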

\begin{proof}
  Note that each $h^*_{ij}$ is less peaked than
  $\mathds{1}_{[-1/2,1/2]}$, hence by Theorem \ref{thm:Kanter},
  $\medprod_{i=1}^N \medprod_{j=1}^n h_{ij}^*$ is less peaked than
  $\medprod_{i=1}^N \mathds{1}_{Q_n}$.  Let $F$ be as in Lemma
  \ref{lemma:concave}, $s>0$ and $H=\mathds{1}_{\{F>s\}}$. For $x_i\in
  \R^n$ we write $x_i=(x_{i1},\ldots,x_{in})$.  Since $F$ is even and
  quasi-concave on its support, we can apply Theorem \ref{thm:RBLLC}
  (considering $F$ as a quasi-concave function on $\R^{nN}$ as in
  Remark \ref{remark:Christ}(i)) and Theorem \ref{thm:Kanter} to
  obtain
  \begin{eqnarray*}
    \lefteqn{\Prob{\phi\left(\medcap_{i=1}^N B(X_i,r_i)\right)>s}}
    \\ & & =\int_{\R^n} \ldots \int_{\R^n} H(x_1,\ldots,x_N)
    \medprod_{i=1}^N \medprod_{j=1}^n h_{ij}(x_{ij}) dx_1\ldots dx_N
    \\ & &\ls \int_{\R^n} \ldots \int_{\R^n} H(x_1,\ldots,x_N)
    \medprod_{i=1}^N \medprod_{j=1}^n h_{ij}^*(x_i) dx_1\ldots
    dx_N\\ & & \ls \int_{\R^n} \ldots \int_{\R^n} H(x_1,\ldots,x_N)
    \medprod_{i=1}^N \mathds{1}_{Q_n}(x_i) dx_1\ldots dx_N\\ & &
    =\Prob{\phi\left(\medcap_{i=1}^N B(Y_i,r_i)\right)>s}.
  \end{eqnarray*} 
\end{proof}

\begin{remark}
  One can adapt the latter argument to treat densities $h_{ij}$ that
  are not necessarily bounded by the same value.  In this case,
  $h_{ij}^*$ is less peaked than $\norm{h_{ij}}_{\infty}
  \mathds{1}_{[-\frac{1}{2\norm{h_{ij}}_{\infty}},
      \frac{1}{2\norm{h_{ij}}_{\infty}}]}$.  Then the corresponding
  extremizers would be uniform measures on suitable coordinate boxes.
\end{remark}

\section{Wulff shapes and ball-polyhedra}

\label{section:W}

Viewing a convex body as the intersection of its supporting halfspaces
leads naturally to approximation by ball-polyhedra of large
radii. More generally, one can work with Wulff shapes, which are
defined as intersections of halfspaces.  In this section, we make this
connection explicit; detailed proofs are included for completeness and
this will aid in interpreting the stochastic dominance discussed in
the introduction.  For background on Wulff shapes in Brunn-Minkowski
theory and further references, see Schneider \cite[\S
  7.5]{Schneider_book_ed2} and for recent results see work of
B\"{o}r\"{o}czky, Lutwak, Yang and Zhang \cite{BLYZ} and Schuster and
Weberndorfer \cite{SchWeb}.

If $f:S^{n-1} \rightarrow \R$ is a positive continuous function, the
Wulff shape $W(f)$ is defined by
\begin{equation}
  W(f) =\medcap_{\theta\in S^{n-1}}H^{-}(\theta, f(\theta)),
\end{equation} where
\begin{equation}
  H^{-}(\theta, f(\theta))=\{x\in \R^n: \langle x, \theta \rangle \ls
  f(\theta)\}.
\end{equation}
Then $W(f)$ is a convex body with the origin in its interior. If $K$
is a convex body with positive support function $h_K$, then $W(h_K) =
K$.

With $f$ as above and $R>\sup_{\theta \in S^{n-1}} f(\theta)$, we
introduce a star body $A(f,R)$ by specifying its radial
function:
\begin{equation}
  \label{eqn:A(f,R)}
  \rho_{A(f,R)}(-\theta) = R - f(\theta) \quad\quad (\theta \in
  S^{n-1}).
\end{equation}  The role of $A(f,R)$ is described in the next
result; as above, $\mathop{\rm vr}(A(f,R))$ is the radius of a
Euclidean ball with the same volume as $A(f,R)$.  When $f$ is the
(positive) support function $h_K$ of a convex body $K$ we also write $A(K,R)$
instead of $A(h_K,R)$.

\begin{proposition}
  \label{prop:A(f,R)}
Let $f:S^{n-1} \rightarrow \R$ be positive and continuous,
$R>\sup_{\theta\in S^{n-1}}f(\theta)$ and $A(f,R)$ as in
\eqref{eqn:A(f,R)}.  Then, in the Hausdorff metric,
\begin{equation}
  \label{eqn:W(f)}
  W(f) = \lim_{R\rightarrow \infty} \medcap_{x\in A(f,R)}B(x,R),
\end{equation} and 
\begin{equation}
  \label{eqn:w}
  R-\mathop{\rm vr}(A(f,R))\ls \int_{S^{n-1}} f(\theta)
    d\sigma(\theta);
\end{equation} moreover, equality holds as $R\rightarrow \infty$.
\end{proposition}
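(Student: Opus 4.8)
The plan is to prove the three assertions separately: the Hausdorff limit \eqref{eqn:W(f)}, then the volume-radius bound \eqref{eqn:w}, then its asymptotic sharpness. For \eqref{eqn:W(f)}, write $b_\theta := -(R-f(\theta))\theta$ for the boundary point of $A(f,R)$ in the direction $-\theta$. Since $b_\theta + R\theta = f(\theta)\theta$ and the hyperplane tangent to $B(b_\theta,R)$ at that point is $\{x:\langle x,\theta\rangle = f(\theta)\}$, we get $B(b_\theta,R)\subseteq H^-(\theta,f(\theta))$; as every point of $A(f,R)$ is a convex combination of $0$ and some $b_\theta$ while $0$ is itself a convex combination of $b_\theta$ and $b_{-\theta}$, convexity of $x\mapsto|y-x|^2$ gives $\medcap_{x\in A(f,R)}B(x,R) = \medcap_{\theta\in S^{n-1}}B(b_\theta,R)\subseteq\medcap_\theta H^-(\theta,f(\theta)) = W(f)$ for every admissible $R$. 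For the reverse inclusion I fix $\eps'\in(0,1)$ and use that $W(f)$ is a convex body with $0$ in its interior: choose $\delta_0>0$ with $\delta_0 B\subseteq W(f)$, so $\delta_0\ls h_{W(f)}(\theta)\ls f(\theta)$ and hence every $p\in(1-\eps')W(f)$ satisfies $\langle p,\theta\rangle\ls(1-\eps')h_{W(f)}(\theta)\ls f(\theta)-\eps'\delta_0$. Expanding, $R^2-|p-b_\theta|^2 = 2R\big(f(\theta)-\langle p,\theta\rangle\big)-|f(\theta)\theta-p|^2\gr 2R\eps'\delta_0-\big(\sup_{S^{n-1}}f+\max_{w\in W(f)}|w|\big)^2$, which is positive as soon as $R$ exceeds a bound depending only on $\eps'$, $\delta_0$ and $f$ (not on $\theta$ or $p$). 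Thus $(1-\eps')W(f)\subseteq\medcap_{x\in A(f,R)}B(x,R)\subseteq W(f)$ for all large $R$, and since $\delta^H\big((1-\eps')W(f),W(f)\big)\ls\eps'\max_{w\in W(f)}|w|$, letting $R\to\infty$ and then $\eps'\to0$ gives \eqref{eqn:W(f)}.

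For \eqref{eqn:w}, the polar volume formula $|A(f,R)| = \omega_n\int_{S^{n-1}}\rho_{A(f,R)}(u)^n\,d\sigma(u)$ together with $\rho_{A(f,R)}(-\theta) = R-f(\theta)$ and the reflection invariance of $\sigma$ yields $\mathop{\rm vr}(A(f,R)) = \big(\int_{S^{n-1}}(R-f(\theta))^n\,d\sigma(\theta)\big)^{1/n}$. Since $R>\sup_{S^{n-1}}f$, the integrand $R-f$ is positive, so Jensen's inequality for the convex function $t\mapsto t^n$ on $[0,\infty)$ gives $\mathop{\rm vr}(A(f,R))\gr\int_{S^{n-1}}(R-f(\theta))\,d\sigma(\theta) = R-\int_{S^{n-1}}f\,d\sigma$, which rearranges to \eqref{eqn:w}.

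Finally, for the limiting equality, write $R-\mathop{\rm vr}(A(f,R)) = R\big(1-(\int_{S^{n-1}}(1-f/R)^n\,d\sigma)^{1/n}\big)$. Expanding $(1-f/R)^n$ by the binomial theorem and integrating term by term — the remainder being $O(R^{-2})$ since $f$ is bounded on the compact sphere — gives $\int_{S^{n-1}}(1-f/R)^n\,d\sigma = 1-\tfrac{n}{R}\int_{S^{n-1}}f\,d\sigma+O(R^{-2})$, hence $(\int_{S^{n-1}}(1-f/R)^n\,d\sigma)^{1/n} = 1-\tfrac1R\int_{S^{n-1}}f\,d\sigma+O(R^{-2})$; multiplying by $R$ shows $R-\mathop{\rm vr}(A(f,R))\to\int_{S^{n-1}}f\,d\sigma$ as $R\to\infty$. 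Among these steps, \eqref{eqn:w} and its sharpness are routine once $\mathop{\rm vr}(A(f,R))$ is recognized as an $L_n(\sigma)$-norm of $R-f$; the step needing genuine care is the reverse inclusion in \eqref{eqn:W(f)}, where the estimates must be made uniform in both $\theta$ and the point $p$.
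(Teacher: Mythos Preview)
Your proof is correct and follows essentially the same route as the paper: reduce the intersection over $A(f,R)$ to the intersection over the boundary points $b_\theta=-(R-f(\theta))\theta$ via a convexity argument (the paper isolates this as a separate lemma on segments), then sandwich $(1-o(1))W(f)\subseteq\medcap_\theta B(b_\theta,R)\subseteq W(f)$ by the tangent-hyperplane inclusion and a uniform quadratic estimate; the inequality \eqref{eqn:w} and its sharpness are handled identically by Jensen and Taylor expansion. The only cosmetic difference is that the paper's reverse-inclusion estimate is phrased as $\{x\in L:\langle x,\theta\rangle\ls(1-O(1/R))f(\theta)\}$ rather than via $\delta_0 B\subseteq W(f)$, but the content is the same.
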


The proof of the proposition relies on the following lemmas.

\begin{lemma} 
  \label{lemma:convex}
  Let $N,n\gr 1$, $x_1,\ldots,x_N\in \R^n$ and set $P=\mathop{\rm
    conv}\{x_1,\ldots,x_N\}$. Then for each $r>0$,
  \begin{equation}
    \label{eqn:convex}
    \medcap_{x\in P} B(x,r) = \medcap_{i=1}^N B(x_i,r).
    \end{equation}
\end{lemma}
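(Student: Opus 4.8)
The plan is to prove the two inclusions separately. The inclusion $\medcap_{x\in P}B(x,r)\subseteq\medcap_{i=1}^N B(x_i,r)$ is immediate, since each $x_i$ is a point of $P=\conv\{x_1,\ldots,x_N\}$, so intersecting over the larger index set $P$ only shrinks the intersection; formally, for every $i$ the single set $B(x_i,r)$ appears among the sets being intersected on the left. The substance is the reverse inclusion $\medcap_{i=1}^N B(x_i,r)\subseteq\medcap_{x\in P}B(x,r)$, which amounts to the following pointwise statement: if $w\in\R^n$ satisfies $\abs{w-x_i}\ls r$ for all $i=1,\ldots,N$, then $\abs{w-x}\ls r$ for every $x\in P$.

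First I would fix such a $w$ and consider the function $x\mapsto \abs{w-x}^2$ on $\R^n$, which is convex (indeed a convex quadratic). An arbitrary $x\in P$ can be written as a convex combination $x=\sum_{i=1}^N \lambda_i x_i$ with $\lambda_i\gr 0$ and $\sum_i\lambda_i=1$. By convexity of $\abs{w-\cdot}^2$ (or equivalently by convexity of the norm $\abs{w-\cdot}$, via Jensen's inequality applied to the convex combination), we get
\begin{equation*}
  \abs{w-x}=\abs[\Big]{w-\sum_{i=1}^N\lambda_i x_i}=\abs[\Big]{\sum_{i=1}^N\lambda_i(w-x_i)}\ls\sum_{i=1}^N\lambda_i\abs{w-x_i}\ls\sum_{i=1}^N\lambda_i r = r.
\end{equation*}
Hence $w\in B(x,r)$ for every $x\in P$, i.e. $w\in\medcap_{x\in P}B(x,r)$. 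This establishes the reverse inclusion, and combining the two gives \eqref{eqn:convex}. Since the argument only uses convexity of the Euclidean ball (equivalently, the triangle inequality together with homogeneity of the norm), there is essentially no obstacle here; the one point worth stating carefully is that the equality $B(x,r)=\{w:\abs{w-x}\ls r\}$ together with the convex-combination estimate above is exactly what is needed, and that the left-hand intersection in \eqref{eqn:convex} is over the compact set $P$ rather than an arbitrary subset, so no measurability or closedness subtleties arise. I expect the only ``care'' required is bookkeeping the two inclusions cleanly; the mathematical content is the one-line Jensen estimate.
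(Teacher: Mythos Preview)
Your proof is correct and essentially identical to the paper's: both argue the trivial inclusion $\medcap_{x\in P}B(x,r)\subseteq\medcap_{i=1}^N B(x_i,r)$ and then, for the reverse, write an arbitrary $x\in P$ as a convex combination $\sum_i\lambda_i x_i$ and apply the triangle inequality to get $\abs{w-x}\ls\sum_i\lambda_i\abs{w-x_i}\ls r$. The only cosmetic difference is notation ($w$ vs.\ $y$, $\lambda_i$ vs.\ $\alpha_i$).
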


\begin{proof}[Proof of Lemma \ref{lemma:convex}]
  Let $y\in \medcap_{i=1}^N B(x_i,r)$ so that $\abs{y-x_i}\ls r$ for
  each $i=1,\ldots,N$.  Let $x\in P$ and write $x=
  \sum_{i=1}^N\alpha_i x_i$, where $\alpha_1,\ldots,\alpha_N\gr 0$ and
  $\sum_{i=1}^N\alpha_i=1$.  Then
  \begin{equation*}
    \abs{y-x} = 
    \abs{\medsum_{i=1}^N\alpha_i y - \medsum_{i=1}^N \alpha_i x_i}
     \ls  \medsum_{i=1}^N \alpha_i \abs{y-x_i}\\
     \ls  r,
  \end{equation*}
  hence $y\in \medcap_{x\in P} B(x,r)$. The reverse inclusion is trivial.
\end{proof}

\begin{lemma}
  \label{lemma:aux}
  Let $f:S^{n-1}\rightarrow \R$ be positive and continuous. Assume
  that $\theta_1,\ldots,\theta_N$ are points on the sphere that do not
  lie on a hemisphere. Then
  \begin{equation}
  \medcap_{i=1}^N H^{-}(\theta_i, f(\theta_i)) = \lim_{R\rightarrow
    \infty} \medcap_{i=1}^NB(-(R-f(\theta_i))\theta_i, R),
  \end{equation}
  where the convergence is in the Hausdorff metric. 
\end{lemma}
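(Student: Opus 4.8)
The plan is to show the two sets in question are close in Hausdorff distance for large $R$ by sandwiching both sides between controlled inner and outer approximations. First I would fix notation: write $H_i^- = H^-(\theta_i, f(\theta_i))$, $P = \bigcap_{i=1}^N H_i^-$, and $c_i(R) = -(R - f(\theta_i))\theta_i$, so that the right-hand side is $P_R := \bigcap_{i=1}^N B(c_i(R), R)$. The key elementary observation is that the boundary sphere $\partial B(c_i(R), R)$ passes through the point $f(\theta_i)\theta_i$ with outer normal $\theta_i$ there, and as $R \to \infty$ this sphere flattens to the hyperplane $\partial H_i^-$; concretely, for any fixed $M > 0$, the portion of $\partial B(c_i(R),R)$ inside the ball $MB$ converges uniformly to the corresponding portion of $\partial H_i^-$, with the ball locally lying on the $P$-side of the hyperplane. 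This gives the inclusion $P_R \cap MB \supseteq$ (something converging up to $P \cap MB$) and also an outer control.

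The main structural point — and this is where the hypothesis that $\theta_1, \dots, \theta_N$ do not lie on a hemisphere is essential — is that $P = \bigcap_i H_i^-$ is \emph{bounded}: if the $\theta_i$ positively span $\R^n$, then $P$ is a bounded polytope (indeed a convex body containing the origin, since each $f(\theta_i) > 0$). Fix $M > 0$ large enough that $P \subseteq \frac{M}{2}B$. I would then argue two inclusions. For the inner bound: given $x \in P$ with, say, $x \in (\frac M2 - \delta)B$, each constraint $\langle x, \theta_i\rangle \le f(\theta_i)$ holds, and a short computation with $|x - c_i(R)|^2 = |x|^2 + 2(R - f(\theta_i))\langle x, \theta_i\rangle + (R-f(\theta_i))^2$ shows $|x - c_i(R)| \le R$ once $R$ is large (the cross term is $\le 2(R-f(\theta_i))f(\theta_i)$, which is dominated by the gap coming from $(R-f(\theta_i))^2$ versus $R^2$ plus the bounded term $|x|^2$); hence $P \cap \text{(slightly shrunk ball)} \subseteq P_R$. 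For the outer bound: if $y \in P_R$ then $|y - c_i(R)| \le R$, which rearranges to $\langle y, \theta_i \rangle \le f(\theta_i) + \frac{|y|^2 - f(\theta_i)^2}{2(R - f(\theta_i))} \le f(\theta_i) + \frac{|y|^2}{2(R-f(\theta_i))}$; one first notes this forces $y$ to stay in a fixed bounded set (using the no-hemisphere condition again, now applied to the perturbed halfspaces, which remain a bounded intersection for $R$ large), and then, with $|y|$ bounded, the error term tends to $0$ uniformly, so $P_R \subseteq P + \eps_R B$ with $\eps_R \to 0$.

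Combining, $\delta^H(P_R, P) \to 0$ as $R \to \infty$, which is the assertion. The step I expect to be the main obstacle is making the outer bound fully rigorous: one must rule out $P_R$ having points escaping to infinity (which would break the uniform smallness of the error term $\frac{|y|^2}{2(R-f(\theta_i))}$). The clean way is to observe that since the $\theta_i$ do not lie in a closed hemisphere, there exist $\lambda_i > 0$ with $\sum_i \lambda_i \theta_i = 0$; summing the inequalities $\langle y, \theta_i\rangle \le f(\theta_i) + \frac{|y|^2}{2(R - f(\theta_i))}$ weighted by $\lambda_i$ does not immediately bound $|y|$, so instead I would pick, for each direction $u \in S^{n-1}$, an index $i$ with $\langle u, \theta_i\rangle \ge c_0 > 0$ (possible uniformly by compactness and the no-hemisphere condition), giving $\langle y, u\rangle \le \frac{1}{c_0}\big(f(\theta_i) + \frac{|y|^2}{2(R-f(\theta_i))}\big)$; taking $u = y/|y|$ yields a quadratic inequality in $|y|$ forcing $|y| \le C$ for a constant $C$ independent of $R$ once $R$ is large. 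With this a priori bound in hand, the rest is the routine uniform estimate sketched above.
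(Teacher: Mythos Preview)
Your outer inclusion is over-engineered because of a sign slip: rearranging $|y-c_i(R)|^2\le R^2$ actually gives
\[
\langle y,\theta_i\rangle \le f(\theta_i)+\frac{f(\theta_i)^2-|y|^2}{2(R-f(\theta_i))},
\]
with the opposite sign on the error. More directly, $\sup\{\langle x,\theta_i\rangle: x\in B(c_i(R),R)\}=\langle c_i(R),\theta_i\rangle+R=f(\theta_i)$, so $B(c_i(R),R)\subseteq H_i^-$ exactly: each ball is tangent to $\partial H_i^-$ at $f(\theta_i)\theta_i$ and lies on the $P$-side. Hence $P_R\subseteq L:=\bigcap_i H_i^-$ for every $R$, and your whole a~priori boundedness argument (the ``main obstacle'') is unnecessary. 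This is precisely how the paper handles the outer bound.

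Your inner inclusion, however, has a real gap. The claim that $x\in P$ with $|x|$ bounded forces $x\in P_R$ for large $R$ is false. If $x$ lies on the face $\langle x,\theta_i\rangle=f(\theta_i)$ with $|x|>f(\theta_i)$, then the same computation gives
\[
|x-c_i(R)|^2=R^2+\bigl(|x|^2-f(\theta_i)^2\bigr)>R^2
\]
for \emph{every} $R$; such $x$ never belong to $P_R$. Your heuristic that ``the cross term is dominated by the gap \ldots\ plus the bounded term $|x|^2$'' fails exactly here: when $\langle x,\theta_i\rangle=f(\theta_i)$ the cross-term bound equals the gap minus $f(\theta_i)^2$, leaving no room to absorb $|x|^2$. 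The fix, which is the paper's argument, is to scale: restricting to $x\in L$ (bounded, since the $\theta_i$ do not lie on a hemisphere), the membership condition $|x-c_i(R)|\le R$ becomes $\langle x,\theta_i\rangle\le (1-O(1/R))f(\theta_i)$ with constants depending only on $\sup_{x\in L}|x|$ and the range of $f$. This yields the sandwich $(1-O(1/R))L\subseteq P_R\subseteq L$, and Hausdorff convergence follows.
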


\begin{proof}[Proof of Lemma \ref{lemma:aux}] 
  Fix $R> \sup_{\theta \in S^{n-1}} f(\theta)$.  Set $L =
  \medcap_{i=1}^N H^{-}(\theta_i, f(\theta_i))$. By definition of
  $A(f,R)$,
\begin{equation}
  \label{eqn:in_K}
  \medcap_{i=1}^N B(-(R-f(\theta_i))\theta_i, R) \subseteq \medcap_{i=1}^N
  H^{-}(\theta_i, f(\theta_i)).
\end{equation}
Next note that for any $\theta\in S^{n-1}$,
\begin{eqnarray*}
  B(-(R-f(\theta))\theta, R) &  \supseteq & \{x\in
  L:\abs{x+(R-f(\theta))\theta}^2\ls R^2\} \\ & = & \left\{x\in L:
  \langle x, \theta\rangle \ls
  \frac{2Rf(\theta)-f^2(\theta)-\abs{x}^2}{2(R-f(\theta))}\right\}\\ &
 = &  \left\{x\in L: \left\langle x,
 \theta\right\rangle \ls
  \frac{1-\frac{f(\theta)}{2R}-
    \frac{\abs{x}^2}{2Rf(\theta)}}{1-\frac{f(\theta)}{R}}f(\theta)\right\}
  \\ & \supseteq & \left\{x\in L: \left\langle x,
  \theta \right \rangle \ls (1-O(1/R)) f(\theta) \right\},
\end{eqnarray*}
where the implied constants in $O(1/R)$ depend only the extreme values
of $f$ on $S^{n-1}$.  Combining this with \eqref{eqn:in_K}, we get
\begin{equation}
  \label{eqn:L}
    (1-O(1/R))L \subseteq \medcap_{i=1}^N
      B(-(R-f(\theta_i))\theta_i, R) \subseteq L,
\end{equation}which gives the result.
\end{proof}

\begin{proof}[Proof of Proposition \ref{prop:A(f,R)}]
  The map $\theta \mapsto -(R-f(\theta))\theta$ is a bijection between
  $S^{n-1}$ and the boundary $\partial A(f,R)$ of $A(f,R)$. Therefore
  \begin{eqnarray}
    \medcap_{\theta \in S^{n-1}}B(-(R-f(\theta))\theta,R) &= &
    \medcap_{x\in \partial A(f,R)}B(x,R)\label{eqn:sphere}\\ & = &
    \medcap_{x\in A(f,R)}B(x,R), \label{eqn:boundary}
    \end{eqnarray}where the last equality is simply 
  Lemma \ref{lemma:convex} applied on each line segment
  \begin{equation*}
  P(\theta) =\conv\{\rho_{A(f,R)}(\theta)\theta,-\rho_{A(f,R)}(-\theta)\theta\}
  \quad (\theta \in S^{n-1}).  
  \end{equation*}
  Thus equality \eqref{eqn:W(f)} follows from Lemma \ref{lemma:aux}.
  Since $A(f,R)$ is a star body, we can use polar coordinates and
  Jensen's inequality to get
  \begin{eqnarray*}
    \mathop{\rm vr}(A(f,R)) & = &
    \left(\int_{S^{n-1}}\rho_{A(f,R)}(-\theta)^n
    d\sigma(\theta)\right)^{1/n} \\ & = &
    \left(\int_{S^{n-1}}\left(R-f(\theta)\right)^n
    d\sigma(\theta)\right)^{1/n}\\ &\gr & R-\int_{S^{n-1}}f(\theta)d\sigma(\theta).
  \end{eqnarray*}
  Writing $\norm{f}_1=\int_{S^{n-1}}f(\theta)d\sigma(\theta)$, we can
  prove that asymptotic equality holds in the latter by Taylor
  expansion:
  \begin{eqnarray*}
     \mathop{\rm vr}(A(f,R))  & = &
    R\left(\int_{S^{n-1}}\left( 1-\frac{n
      f(\theta)}{R}+O(1/R^2)\right) d\sigma(\theta)\right)^{1/n} \\ & = 
    &  R\left(1-\frac{n \norm{f}_1}{R}+O(1/R^2)\right)^{1/n}\\ & = &
    R\exp\left(\frac{1}{n} \log \left(1-\frac{n \norm{f}_1}{R}
    +O(1/R^2)\right)\right)\\ & = & R\exp\left(\frac{1}{n}
    \left(-\frac{n \norm{f}_1}{R} +O(1/R^2)\right)\right)\\ & = &
    R\exp \left(-\frac{ \norm{f}_1}{R} +O(1/R^2)\right)\\ & = &
    R\left(1-\frac{\norm{f}_1}{R} +O(1/R^2)\right)\\ & = &  R -
    \norm{f}_1 +O(1/R).
  \end{eqnarray*}
\end{proof}

\section{Randomized inequalities related to the generalized Urysohn inequality}

\label{section:U}

In this section, we discuss and compare the randomized versions of
the generalized Urysohn inequality. We start by sketching the proof of
B\"{o}r\"{o}czky and Schneider mentioned in the introduction.  With
$\mu_K$ and $\mathcal{H}_K$ as above, we have
\begin{equation*}
  \mu_K(A) = \int_{S^{n-1}}\int_0^1
  \mathds{1}_{\{H(\theta,h_K(\theta)+t)\in A\}} dt d\sigma(\theta)
\end{equation*}
for Borel sets $A\subseteq \mathcal{H}_K$.  For $\Theta
=(\theta_1,\ldots,\theta_N)\in (S^{n-1})^N$ and $t=(t_1,\ldots,t_N)\in
\R^N$, set 
\begin{equation*}
  P(K,\Theta, t) = H^{-}(\theta_1,h_K(\theta_1)+t_1)\cap \ldots \cap
  H^{-}(\theta_N,h_K(\theta_N)+t_N)\cap (K+B).
\end{equation*}
Write $d\Theta$ for $d\sigma(\theta_1)\ldots d\sigma(\theta_N)$.  For
indepedent random hyperplanes $H_1,\ldots,H_N$ sampled according to
$\mu_K$, set
\begin{equation*}
K^{(N)}= \medcap_{i=1}^N H_i^{-}\cap (K+B).
\end{equation*}Then
\begin{equation*}
  \EE V_j\left(K^{(N)}\right)^{1/j} =\int_{(S^{n-1})^N}\int_{[0,1]^N}
  V_j\left(P(K,\Theta, t)\right)^{1/j}dt d\Theta.
  \end{equation*}
For convex bodies $K$ and $L$ in $\R^n$ and $\alpha\in [0,1]$, one has
the following inclusion
\begin{equation*}
  (1-\alpha)P(K,\Theta, t) +\alpha P(L,\Theta, t)\subseteq
  P((1-\alpha)K+\alpha L, \Theta, t).
\end{equation*}Thus
\begin{eqnarray*}
V_j\left( P((1-\alpha)K+\alpha L, \Theta, t)\right)^{1/j}
  \gr (1-\alpha)V_j\left( P(K,\Theta, t)\right)^{1/j} +\alpha
  V_j\left(P(L,\Theta, t)\right)^{1/j}.
\end{eqnarray*}
Then 
\begin{equation*}
  \EE V_j\left([(1-\alpha)K+\alpha L]^{(N)}\right)^{1/j}\gr
  (1-\alpha)\EE V_j\left(K^{(N)}\right)^{1/j} +\alpha \EE
  V_j\left(L^{(N)}\right)^{1/j}.
\end{equation*}
Thus, $K\mapsto \EE V_j(K^{(N)})^{1/j}$ is concave with respect to
Minkowski addition and it is also rotation invariant and continuous
with respect to $\delta^H$.  In particular, for any direction $u$, the
Minkowski symmetral $M_u(K)=\frac{K+R_u(K)}{2}$ satisfies
$\EE V_j(M_u(K)^{(N)})^{1/j}\gr \EE V_j(K^{(N)})^{1/j}$ (where, as above, $R_u$ denotes
reflection about $u^{\perp}$).  A theorem of Hadwiger (e.g.,
\cite{Schneider_book_ed2}) implies there is a sequence of directions
so that successive Minkowski symmerizations about those directions
converge to a Euclidean ball with the same mean width as $K$.  This
establishes \eqref{eqn:SchBor}.

Next, we prove the following extension of Corollary \ref{cor:ball}.

\begin{corollary}
  \label{cor:ball_body}
  Let $K$ be a convex body in $\R^n$ with the origin in its interior,
  $R>0$ and assume $K\subseteq B(0,R)$. Consider independent random
  vectors $X_1,\ldots, X_N$ sampled according to
  $\frac{1}{\abs{A(K,R)}}\mathds{1}_{A(K,R)}$ and $Z_1,\ldots,Z_N$
  according to $\frac{1}{\abs{rB}}\mathds{1}_{rB}$, where $r=r(K,n,R)$
  satisfies $\abs{A(K,R)}=\abs{rB}$. Let $\phi:\mathcal{K}^n
  \rightarrow (0,\infty)$ satisfy $(a), (b)$ and $(c)$ (as defined at
  the beginning of \S \ref{section:rbp}).  Then for each $p\in \R$,
  \begin{equation}
    \label{eqn:ball_body_1}
    \left(\EE \phi\left(\medcap_{i=1}^N
    B(X_i,R)\right)^p\right)^{1/p}\ls \left(\EE
    \phi\left(\medcap_{i=1}^N B(Z_i,R)\right)^p\right)^{1/p}.
  \end{equation}
  Consequently, if $\phi$ is continuous with respect to $\delta^H$, we
  get
    \begin{equation}
      \label{eqn:ball_body_2}
      \min_{x_1,\ldots, x_N \in A(K,R)} \phi \left(\medcap_{i=1}^N
      B(x_i,R)\right)\ls \min_{z_1,\ldots,z_N\in
        rB}\phi\left(\medcap_{i=1}^N B(z_i,R)\right).
  \end{equation}
\end{corollary}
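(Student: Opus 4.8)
The plan is to derive \eqref{eqn:ball_body_1} from the second part of Theorem \ref{thm:main} together with standard consequences of stochastic dominance, and then obtain \eqref{eqn:ball_body_2} by letting $p\to-\infty$. First I would record the elementary a priori bounds that make all the moments finite and keep the limiting argument under control. By Proposition \ref{prop:A(f,R)}, $A(K,R)$ is a star body, and since $K$ contains the origin in its interior, $\eps_0:=\min_{\theta\in S^{n-1}}h_K(\theta)>0$; hence $\rho_{A(K,R)}(-\theta)=R-h_K(\theta)\ls R-\eps_0$, so $A(K,R)\subseteq B(0,R-\eps_0)$, and since $\abs{rB}=\abs{A(K,R)}$ also $rB\subseteq B(0,R-\eps_0)$. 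Consequently, for any $x_1,\ldots,x_N$ in $A(K,R)\cup rB$ each $B(x_i,R)$ contains $0$ in its interior (as $\abs{x_i}\ls R-\eps_0<R$) and, moreover, $B(0,\eps_0)\subseteq\medcap_{i=1}^N B(x_i,R)\subseteq B(0,2R)$; by $(b)$ and $(c)$ this gives
\begin{equation*}
  0<\phi(B(0,\eps_0))\ls\phi\left(\medcap_{i=1}^N B(x_i,R)\right)\ls\phi(B(0,2R))<\infty.
\end{equation*}
Thus $U:=\phi\left(\medcap_{i=1}^N B(X_i,R)\right)$ and $V:=\phi\left(\medcap_{i=1}^N B(Z_i,R)\right)$ are bounded and bounded away from $0$, so $\EE U^p$ and $\EE V^p$ are finite and positive for every $p\in\R$.

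Next I would apply the second part of Theorem \ref{thm:main} with $f_i=\abs{A(K,R)}^{-1}\mathds{1}_{A(K,R)}$ and $r_i=R$ for $i=1,\ldots,N$. Then $a_i=\norm{f_i}_\infty=\abs{A(K,R)}^{-1}$, and the normalization $\int_{\R^n}a_i\mathds{1}_{b_iB}\,dx=1$ forces $\abs{b_iB}=\abs{A(K,R)}=\abs{rB}$, i.e.\ $b_i=r$, so $a_i\mathds{1}_{b_iB}=\abs{rB}^{-1}\mathds{1}_{rB}$. Hence \eqref{eqn:main_b} reads $\Prob{U>s}\ls\Prob{V>s}$ for all $s\gr0$. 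From this dominance together with the two-sided bounds, \eqref{eqn:ball_body_1} is routine: for $p>0$, integrate $\EE U^p=\int_0^\infty ps^{p-1}\Prob{U>s}\,ds$ against the dominance to get $\EE U^p\ls\EE V^p$ and raise to the power $1/p$; for $p<0$, the map $t\mapsto t^p$ is decreasing, so the same dominance yields $\EE U^p\gr\EE V^p$ and raising to the (negative) power $1/p$ reverses the inequality; the case $p=0$ follows by the same argument with $\log t$ in place of $t^p$ (or by letting $p\to0$ in the cases already proved).

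Finally, for \eqref{eqn:ball_body_2} I would let $p\to-\infty$ in \eqref{eqn:ball_body_1}. Since $U$ and $V$ are bounded and bounded away from $0$, $(\EE U^p)^{1/p}\to\operatorname{ess\,inf}U$ and $(\EE V^p)^{1/p}\to\operatorname{ess\,inf}V$, so $\operatorname{ess\,inf}U\ls\operatorname{ess\,inf}V$. It remains to identify $\operatorname{ess\,inf}U$ with $\min_{x_1,\ldots,x_N\in A(K,R)}\phi\left(\medcap_{i=1}^N B(x_i,R)\right)$, and likewise for $V$ over $(rB)^N$; this is where the continuity of $\phi$ with respect to $\delta^H$ is used. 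The map $x\mapsto B(x,R)$ is $\delta^H$-continuous, and since all the bodies $B(x_i,R)$ with $x_i\in A(K,R)$ share the common interior point $0$, the intersection operation $(x_1,\ldots,x_N)\mapsto\medcap_{i=1}^N B(x_i,R)$ is $\delta^H$-continuous on the compact set $(A(K,R))^N$; composing with $\phi$ gives a continuous function on $(A(K,R))^N$, which therefore attains its minimum. Because the law of $(X_1,\ldots,X_N)$ is the uniform probability measure on $(A(K,R))^N$, which has full support, the essential infimum of $U$ equals this minimum; the same reasoning applies verbatim to $V$ over $(rB)^N$, and \eqref{eqn:ball_body_2} follows. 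I expect the only genuinely non-formal point to be the $\delta^H$-continuity of the intersection map on $(A(K,R))^N$ (needed both to attain the minimum and to equate it with the essential infimum), which rests on the uniform ``core'' $B(0,\eps_0)$ contained in all the relevant ball-polyhedra; the convergence $(\EE U^p)^{1/p}\to\operatorname{ess\,inf}U$ is standard given the two-sided bounds from the first step.
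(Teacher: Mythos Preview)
Your proposal is correct and follows essentially the same approach as the paper: establish uniform two-sided bounds on $\phi\left(\medcap_{i=1}^N B(x_i,R)\right)$ using a small ball contained in $K$, invoke Theorem \ref{thm:main} to get stochastic dominance and hence the moment inequality \eqref{eqn:ball_body_1}, and then let $p\to-\infty$ to obtain \eqref{eqn:ball_body_2}. You spell out more carefully than the paper does why stochastic dominance yields \eqref{eqn:ball_body_1} for all $p\in\R$ and why the limit $(\EE U^p)^{1/p}\to\operatorname{ess\,inf}U$ coincides with the minimum (via $\delta^H$-continuity of the intersection map and full support of the uniform law), but these are precisely the details the paper leaves as routine.
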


\begin{proof}
  Choose $\eps>0$ such that $B(0,\eps)\subseteq K$ so that for any
  $x_1,\ldots, x_N\in A(K,R)$, we have
  \begin{equation*}
    B(0,\eps) \subseteq \medcap_{i=1}^N B(x_i,R) \subseteq B(x_1,R).
  \end{equation*} 
  In particular, all the moments in \eqref{eqn:ball_body_1} are
  positive and finite. The same argument applies to points in $rB$.
  The moment inequality \eqref{eqn:ball_body_1} follows immediately
  from Theorem \ref{thm:main}.  If $\phi$ is continuous, then
  \begin{equation}
    \min_{x_1,\ldots, x_N \in A(K,R)} \phi \left(\medcap_{i=1}^N
      B(x_i,R)\right) = \lim_{p\rightarrow -\infty}
          \left(\EE \phi\left(\medcap_{i=1}^N
    B(X_i,R)\right)^p\right)^{1/p}
  \end{equation}
and the same holds for $rB$ in place of $A(K,R)$ and $Z_i$ for $X_i$,
which gives \eqref{eqn:ball_body_2}.
\end{proof}

For $N>n$ and $j\in\{1,\ldots,n\}$, let
\begin{equation}
  \label{eqn:mjN}
  m_{j, N}(K) = \min\left\{ V_j\left(\medcap_{i=1}^N H_i^{-}\right):
  K\subseteq H_i^{-}, i=1,\ldots,N\right\},
\end{equation}
where $H_i^{-}$ is the closed halfspace bounded by $H_i$ that contains
$K$.  As a consequence of Corollary \ref{cor:ball_body}, we get the
following, which is a special case of a result due to Schneider
\cite{Schneider_67}.

\begin{corollary} 
  \label{cor:Schneider}
  Let $K$ be a convex body in $\R^n$, $N>n$ and
  $j\in\{1,\ldots,n\}$. Then
  \begin{equation}
    \label{eqn:Sch_67}
    m_{j, N}(K) \ls m_{j,N}((w(K)/2)B).
  \end{equation}
\end{corollary}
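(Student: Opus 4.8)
The goal is to deduce Corollary \ref{cor:Schneider} from Corollary \ref{cor:ball_body} by a limiting argument ($R\to\infty$) together with the geometry of the sets $A(K,R)$ developed in \S\ref{section:W}. First I would reduce to the scale-normalized setting: since $m_{j,N}$ is homogeneous of degree $j$ in $K$ (halfspaces containing $\lambda K$ are $\lambda$-dilates of those containing $K$), and since replacing $K$ by $K+x$ only translates the configuration of halfspaces, I may assume the origin lies in the interior of $K$ and work with $K$ directly; the quantity $w((w(K)/2)B) = w(K)$ is unchanged. Fix $R$ large enough that $K\subseteq B(0,R)$, and let $r=r(K,n,R)$ satisfy $\abs{A(K,R)}=\abs{rB}$. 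Apply Corollary \ref{cor:ball_body} with $\phi=V_j$ (which satisfies $(a),(b),(c)$ and is $\delta^H$-continuous) to obtain
\begin{equation*}
  \min_{x_1,\ldots,x_N\in A(K,R)} V_j\left(\medcap_{i=1}^N B(x_i,R)\right)
  \ls \min_{z_1,\ldots,z_N\in rB} V_j\left(\medcap_{i=1}^N B(z_i,R)\right).
\end{equation*}

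**Passing to the limit.** The next step is to identify both sides as $R\to\infty$. For the left side: by Lemma \ref{lemma:convex}, $\medcap_{i=1}^N B(x_i,R) = \medcap_{x\in\conv\{x_1,\ldots,x_N\}}B(x,R)$, and as the $x_i$ range over $A(K,R)$ the polytopes $\conv\{x_1,\ldots,x_N\}$ range (up to taking $N$ points) over sub-star-bodies of $A(K,R)$; the key point from \S\ref{section:W} (Lemma \ref{lemma:aux} and the proof of Proposition \ref{prop:A(f,R)}) is that a point $x=-(R-h_K(\theta))\theta\in\partial A(K,R)$ produces a ball $B(x,R)$ that, as $R\to\infty$, converges to the halfspace $H^-(\theta,h_K(\theta))$ supporting $K$ in direction $\theta$. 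Thus the minimizing configurations on the left converge to minimizing configurations of $N$ halfspaces containing $K$, so the left side tends to $m_{j,N}(K)$. For the right side, the same reasoning with $f\equiv 0$ replaced by the constant function: points in $rB$ are of the form $-(R-\rho)\cdot(\text{unit vector})$ with $R-\rho$ ranging near $R-\mathop{\rm vr}(A(K,R))$, which by \eqref{eqn:w} tends to $\norm{h_K}_1 = w(K)/2$; so the balls $B(z_i,R)$ converge to halfspaces at distance $w(K)/2$ from the origin containing $(w(K)/2)B$, and the right side tends to $m_{j,N}((w(K)/2)B)$.

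**The main obstacle.** The delicate point is justifying the interchange of the limit $R\to\infty$ with the minimum over $N$-point configurations — i.e. that $\lim_{R\to\infty}\min_{x_i\in A(K,R)} V_j(\cdots) = m_{j,N}(K)$, and likewise for the ball. One direction is easy: any fixed halfspace configuration $\{H^-_i\}$ containing $K$ can be approximated by balls $B(x_i^{(R)},R)$ with $x_i^{(R)}\in A(K,R)$ (using \eqref{eqn:A(f,R)}–\eqref{eqn:L}), giving $\limsup_R \min \le m_{j,N}(K)$ by upper semicontinuity of $V_j$ under the relevant convergence. The reverse inequality requires a compactness argument: the minimizing configurations $(x_1^{(R)},\ldots,x_N^{(R)})$ lie in a region that, after the rescaling implicit in Lemma \ref{lemma:aux}, corresponds to bounded directional data $\theta_i^{(R)}\in S^{n-1}$ and bounded "slack" parameters; passing to a subsequence, the directions converge and the limiting balls become halfspaces $H^-(\theta_i,h_K(\theta_i))$ (or with larger offsets, which only increases $V_j$ by monotonicity), yielding a valid halfspace configuration containing $K$ whose $j$-th intrinsic volume is $\le\liminf_R\min$. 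One must also handle the degenerate case where the limiting halfspaces fail to have bounded intersection — but for $N>n$ this can be arranged, as in the definition \eqref{eqn:mjN}, by noting that configurations with unbounded intersection have $V_j=\infty$ and so are never minimizers. Combining the two inequalities gives the limits claimed, and the inequality \eqref{eqn:Sch_67} follows by taking $R\to\infty$ in the displayed inequality above.
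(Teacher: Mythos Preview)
Your approach is the paper's: apply \eqref{eqn:ball_body_2} with $\phi=V_j$ and let $R\to\infty$. The paper, however, sidesteps the interchange-of-limit issue you flag as the main obstacle. First it observes that, since $A(K,R)$ is star-shaped and $V_j$ is quasi-concave along segments through the origin, the minimum over $x_i\in A(K,R)$ is already attained on $\partial A(K,R)$; the same holds for $rB$. One can therefore parametrize the centers as $-(R-h_K(\theta_i))\theta_i$ with $\theta_i\in S^{n-1}$. Then the two-sided inclusion \eqref{eqn:L} gives, for each fixed $(\theta_1,\ldots,\theta_N)$ with $L=\medcap_i H^{-}(\theta_i,h_K(\theta_i))$, that $(1-O(1/R))L\subseteq \medcap_i B(-(R-h_K(\theta_i))\theta_i,R)\subseteq L$; applying $V_j$ and minimizing over $\theta\in (S^{n-1})^N$ yields directly
\[
m_{j,N}(K)=\sup_{R>0}\ \min_{\theta_1,\ldots,\theta_N\in S^{n-1}} V_j\Bigl(\medcap_{i=1}^N B\bigl(-(R-h_K(\theta_i))\theta_i,R\bigr)\Bigr),
\]
with no subsequence extraction needed. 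The right-hand side is handled identically after invoking the asymptotic equality in \eqref{eqn:w}, which gives $r(K,n,R)=R-w(K)/2+o(1)$. Your compactness route can be made to work, but note that it too requires the boundary reduction in order to obtain a compact parameter space $(S^{n-1})^N$ for the minimizing configurations.
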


\begin{proof}
  Without loss of generality we will assume that the origin is an
  interior point of $K$. Choose $R>0$ such that $K\subseteq B(0,R)$.
  Since $A(K,R)$ is star-shaped, for any $x\in A(K,R)$ the line
  through $x$ and the origin intersects $A(K,R)$ in a line segment,
  the endpoints of which are on the boundary $\partial A(K,R)$. Hence
  quasi-concavity of $V_j$ yields
  \begin{equation}
    \label{eqn:boundary}
    \min_{x_1,\ldots, x_N \in A(K,R)} V_j \left(\medcap_{i=1}^N
    B(x_i,R)\right) = \min_{x_1,\ldots, x_N \in \partial A(K,R)} V_j
    \left(\medcap_{i=1}^N B(x_i,R)\right);
  \end{equation}the same holds with $rB$ in place of $A(K,R)$.
  Therefore, \eqref{eqn:ball_body_2} implies
  \begin{equation}
    \min_{x_1,\ldots,x_N \in \partial A(K,R)}
    V_j\left(\medcap_{i=1}^N B(x_i,R)\right) \ls \min_{x_1,\ldots,x_N
      \in rS^{n-1}} V_j\left(\medcap_{i=1}^N B(x_i,R)\right),
  \end{equation} hence
  \begin{eqnarray*}
    \min_{\theta_1,\ldots,\theta_N\in S^{n-1}}
    V_j\left(\medcap_{i=1}^N B(-(R-h_K(\theta_i))\theta_i,R)\right)
    \ls \min_{x_1,\ldots,x_N \in rS^{n-1}}
    V_j\left(\medcap_{i=1}^N B(x_i,R)\right).
  \end{eqnarray*} 
  By \eqref{eqn:L}, we have
  \begin{equation}
    \label{eqn:mK}
    m_{j,N}(K) = \sup_{R>0} \min_{\theta_1,\ldots,\theta_N\in
      S^{n-1}} V_j\left(\medcap_{i=1}^N B(-(R-h_K(\theta_i)\theta_i,
    R)\right).
  \end{equation}
  For $K=(w(K)/2)B$, we get
  \begin{eqnarray}
    m_{j,N}((w(K)/2) B) &= &\sup_{R>0}
    \min_{\theta_1,\ldots,\theta_N\in S^{n-1}} V_j\left(\medcap_{i=1}^N
  B(-(R-w(K)/2)\theta_i, R)\right) \nonumber\\
  & = & \sup_{R>0}
  \min_{\theta_1,\ldots,\theta_N\in S^{n-1}} V_j\left(\medcap_{i=1}^N
  B(-r(K,n,R)\theta_i, R)\right),\label{eqn:mB}
  \end{eqnarray}
  where the latter follows from the asymptotic equality in
  \eqref{eqn:w}. The corollary now follows from \eqref{eqn:mK} and
  \eqref{eqn:mB}.
\end{proof}

When $N\rightarrow \infty$ in \eqref{eqn:Sch_67}, we get
\eqref{eqn:ury}.  This indicates that the stochastic dominance in
Theorem \ref{thm:ball} for
$f=\frac{1}{\abs{A(K,R)}}\mathds{1}_{A(K,R)}$ can be regarded as a
distributional form of \eqref{eqn:ury}.

\begin{remark}
Schneider \cite{Schneider_67} proved a more general variant of
\eqref{eqn:Sch_67} with $V_j$ replaced by a function $\phi$ which is
concave, monotone, upper semi-continuous and minimized over rotations
of $K$. We do not pursue a more detailed comparison as this is not our
main goal.
\end{remark}


Schneider's result \eqref{eqn:Sch_67} is a companion to that of
Macbeath for maximum volume simplices inscribed in convex bodies
\cite{Macbeath}.  Using \eqref{eqn:Sch_67} for $j=n$ and the reverse
Urysohn inequality due to Pisier \cite{Pisier} and Figiel and
Tomczak-Jaegermann \cite{FNTJ} we get the following.

\begin{corollary}
  Let $K$ be a convex body in $\R^n$. Then there is a simplex $S$
  containing $K$ such that
  \begin{equation}
    V_n(S) \leq (C\log n)^n n^{\frac{n+1}{2}} V_n(K),
  \end{equation}where $C$ is an absolute constant.
\end{corollary}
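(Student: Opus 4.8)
The plan is to combine Schneider's minimal-simplex inequality \eqref{eqn:Sch_67} in the case $j=n$ with the reverse Urysohn inequality to control the volume of a near-optimal circumscribed simplex. First I would apply \eqref{eqn:Sch_67} with $j=n$ and $N=n+1$: there is a simplex $S\supseteq K$ with $V_n(S)\ls m_{n,n+1}(K)\ls m_{n,n+1}\big((w(K)/2)B\big)$, the right-hand side being the minimal volume of an $(n+1)$-facet polytope containing the ball of radius $w(K)/2$. By direct computation that minimizer is the regular simplex circumscribed about that ball, and its volume is $(w(K)/2)^n$ times the volume of the regular simplex circumscribed about the unit ball; the latter is $n^{n/2}(n+1)^{(n+1)/2}/n!\,\omega_n^{\;}$ up to standard factors, which by Stirling is of order $n^{n/2}$ times a polynomial, so $m_{n,n+1}((w(K)/2)B)\ls c^n n^{n/2}(w(K)/2)^n\,\omega_n$ after absorbing the gamma-function factors. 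This is the step that only requires routine estimates on the regular circumscribed simplex.

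Next I would bring in the reverse Urysohn inequality of Pisier \cite{Pisier} and Figiel--Tomczak-Jaegermann \cite{FNTJ}: for any convex body $K\subseteq\R^n$ there is a position $\tilde K=UK+v$ (an affine image, in particular with the same volume as $K$ if $U\in SL_n$, or more precisely a linear image for which we track the volume change) such that $w(\tilde K)/2\ls C\log n\,\big(V_n(\tilde K)/\omega_n\big)^{1/n}$. Since both $m_{n,n+1}$ and the assertion are affine-invariant in the appropriate sense, I may replace $K$ by this position without loss of generality. Substituting this mean-width bound into the previous display gives
\begin{equation*}
  V_n(S)\ls c^n n^{n/2}\,\omega_n\,\Big(\frac{w(K)}{2}\Big)^n
  \ls c^n n^{n/2}\,\omega_n\,(C\log n)^n\,\frac{V_n(K)}{\omega_n}
  = (C'\log n)^n\, n^{n/2}\,V_n(K).
\end{equation*}
The exponent $n/2$ here does not match the claimed $n^{(n+1)/2}$; the discrepancy is exactly one extra factor of $n^{1/2}$, which comes from being slightly more careful with the Stirling estimate of $n^{n/2}(n+1)^{(n+1)/2}/n!$ against $\omega_n$, or equivalently from the isotropic/John-position normalization used in the reverse Urysohn statement — tracking that constant honestly produces $n^{(n+1)/2}$ rather than $n^{n/2}$, and I would carry the $(n+1)^{(n+1)/2}$ factor through explicitly rather than crudely bounding it.

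The main obstacle, then, is bookkeeping rather than conceptual: one must (i) identify the extremal polytope in $m_{n,n+1}((w(K)/2)B)$ as the regular circumscribed simplex and compute its volume exactly in terms of $\omega_n$ and gamma functions, and (ii) pair that computation with the precise normalization in the reverse Urysohn inequality (mean width versus volume radius in the John, isotropic, or $\ell$-position) so that the powers of $n$ combine to give $n^{(n+1)/2}$. Once those two constants are pinned down, the inequality follows by the chain of estimates above, with $C$ an absolute constant absorbing the geometric factors and the constant from \cite{Pisier}, \cite{FNTJ}. I would also remark that, as in \cite{Schneider_67}, no special structure of $K$ is used beyond convexity, and that the $\log n$ factor is the only place where the deep reverse-Urysohn input enters.
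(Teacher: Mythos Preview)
Your approach is exactly the paper's: apply Schneider's inequality \eqref{eqn:Sch_67} with $j=n$, $N=n+1$, use the explicit value $m_{n,n+1}(B)=n^{n/2}(n+1)^{(n+1)/2}/n!$ for the regular circumscribed simplex, and then invoke the reverse Urysohn inequality after putting $K$ in a suitable affine position. The only issue is the bookkeeping you flag yourself: there is no stray $\omega_n$ in $m_{n,n+1}(B)$, and the factor $n^{(n+1)/2}$ appears precisely when you combine $m_{n,n+1}(B)\sim e^{n}$ (up to polynomial factors, by Stirling) with $1/\omega_n\sim (n/(2\pi e))^{n/2}\sqrt{\pi n}$ coming from $(w(K)/2)^n\ls (C\log n)^n V_n(K)/\omega_n$; carrying these through cleanly gives the stated bound with an absolute constant.
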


The latter improves on a result of Kanazawa \cite{Kanazawa} who proved
that $V_n(S) \leq n^{n-1} V_n(K)$, which extends a classical planar
result of Gross \cite{Gross} to higher dimensions.

\begin{proof}
  As the problem is invariant under affine transformations, we may
  first apply the reverse Urysohn inequality due to Pisier and Figiel
  and Tomczak-Jaegermann (see \cite[Theorem 6.5.4]{AGM}) and assume
  that
  \begin{equation}
    \frac{w(K)}{w(B)} \ls C_1\log n
    \left(\frac{V_n(K)}{V_n(B)}\right)^{1/n},
  \end{equation}
where $C_1$ is an absolute constant.  By Schneider's result
\eqref{eqn:Sch_67} we have
\begin{equation}
  \label{eqn:simplex_1}
  m_{n, n+1} (K) \ls m_{n,n+1}(B) \left(\frac{w(K)}{w(B)}\right)^n.
\end{equation}
On the other hand, 
\begin{equation}
  \label{eqn:simplex_2}
  m_{n,n+1}(B)=\frac{n^{\frac{n}{2}}(n+1)^{\frac{n+1}{2}}}{n!}.
\end{equation}
  The result now follows from \eqref{eqn:simplex_1} and
  \eqref{eqn:simplex_2}.
\end{proof}

\subsection{Further connections to Minkowski symmetrization}

In this section, we discuss the effect of Minkowski symmetrization of
$K$ on $A(K,R)$. We show that one can obtain \eqref{eqn:ball_body_2}
via Minkowski symmetrization as well.  If $K$ and $L$ are convex
bodies, the equality $h_{(K+L)/2} = (h_K+h_L)/2$ implies
\begin{equation*}
  \rho_{A\left(\frac{K+L}{2}, R\right)} = \frac{1}{2}(\rho_{A(K,R)}
+\rho_{A(L,R)}).
\end{equation*}
In other words, the map $K\mapsto A(K,R)$ takes Minkowski sums to
radials sums.  In particular, if $u\in S^{n-1}$ and $M_u(K)$ is the
Minkowski symmetral of $K$ about $u^{\perp}$, then $A(M_u(K),R)$ is
the star-body with radial function $\frac{1}{2}(\rho_{A(K,R)}
+\rho_{A({R_u(K)},R)})$.

Assume now that $\theta_1,\ldots,\theta_N\in S^{n-1}$. Then
\begin{eqnarray*} 
\lefteqn{\medcap_{i=1}^N B(-(R-h_{M_u(K)}(\theta_i)\theta_i,R) }\\ & &
= \medcap_{i=1}^N B\left(\rho_{A(M_u(K),R)}(\theta_i)\theta_i, R\right) \\ & &
\supseteq \frac{1}{2} \medcap_{i=1}^N B(\rho_{A(K,R)}(\theta_i)\theta_i, R) +
\frac{1}{2}\medcap_{i=1}^N B(\rho_{A(R_u(K),R)}(\theta_i)\theta_i, R) \\ & & =
\frac{1}{2}\medcap_{i=1}^N B(-(R-h_K(\theta_i)\theta_i),R) +
\frac{1}{2} \medcap_{i=1}^N B(-(R-h_{R_u(K)}(\theta_i))\theta_i,R)\\ &
& = \frac{1}{2}\medcap_{i=1}^N B(-(R-h_K(\theta_i)\theta_i),R) +
\frac{1}{2} R_{u}\left(\medcap_{i=1}^N
B(-(R-h_{K}(R_u^t\theta_i))R_u^t\theta_i,R)\right),
\end{eqnarray*}
where $R_u^t$ is the transpose of $R_u$. We now use quasi-concavity of
$\phi$ and rotation invariance to get
\begin{equation}
  \label{eqn:M}
  \phi\left({\medcap_{i=1}^N
    B(-(R-h_{M_u(K)}(\theta_i))\theta_i,R)}\right) \gr
  \min_{\theta_1,\ldots,\theta_N\in S^{n-1}}
  \phi\left({\medcap_{i=1}^N
    B(-(R-h_K(\theta_i))\theta_i,R)}\right).
\end{equation}
As mentioned above, given a convex body $K$, a theorem of Hadwiger
implies that there is a sequence of directions so that successive
Minkowski symmetrizations about those directions converge to a
Euclidean ball with the same mean width as $K$. Combining this with
inequality \eqref{eqn:M}, we get another proof of
\eqref{eqn:ball_body_2}.

\subsection{Connection between random ball-polyhedra and random convex hulls}
\label{sub:connection}

As mentioned already, the inequality for random ball-polyhedra
obtained by taking $j=n$ in \eqref{eqn:ball} implies Urysohn's
inequality, and so does the inequality for random convex hulls when
$j=1$ in \eqref{eqn:random_iso_avg}. Here we show that the former
implies the latter.  The proof uses the following theorem of
Gorbovickis \cite[Theorem 4]{Gorbovickis}.

\begin{theorem} 
  \label{thm:G}
Let $x_1\ldots,x_N\in \R^n$ where $n\gr 2$. Then the following
asymptotic equality holds as $R\rightarrow \infty$:
\begin{equation}
\vol{\left(\medcap_{i=1}^N B(x_i,R)\right)} = \omega_n R^n - n\omega_n
w(\conv\{x_1,\ldots,x_N\}) R^{n-1} +o(R^{n-1}).
\end{equation}
\end{theorem}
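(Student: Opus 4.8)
The statement is Gorbovickis's asymptotic expansion for the volume of an intersection of $N$ congruent balls of large radius, centered at $x_1,\ldots,x_N\in\R^n$, $n\ge 2$. The plan is to reduce the intersection of balls to the intersection of supporting halfspaces of the Wulff shape, and then to apply the Steiner formula. By Lemma \ref{lemma:convex}, $\medcap_{i=1}^N B(x_i,R)=\medcap_{x\in P}B(x,R)$ where $P=\conv\{x_1,\ldots,x_N\}$; so (after possibly a translation, which changes nothing) we may assume $0$ is interior to $P$ and work with the support function $h_P$. First I would record that for $R$ large the set $\medcap_{x\in P}B(x,R)$ is a convex body containing a fixed ball, and is squeezed between the Wulff shape $W(R-\rho)$-type bodies appearing in Lemma \ref{lemma:aux} and Proposition \ref{prop:A(f,R)}: in fact, with $A(P,R)$ the star body of radial function $R-h_P$ (this only makes sense once $P$ has nonempty interior, which is the case iff $x_1,\ldots,x_N$ do not lie in a hyperplane; I would handle the degenerate affine-hull case separately, see below), equation \eqref{eqn:L} gives $(1-O(1/R))L\subseteq \medcap_{i=1}^N B(x_i,R)\subseteq L$ where $L=\medcap_i H^-(\theta_i,h_P(\theta_i))$ — but this is too crude, since we need the $R^{n-1}$ term exactly, not up to a multiplicative $O(1/R)$.

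So the sharper route is direct. Fix a direction $\theta\in S^{n-1}$ and compute the support function $h(\theta)$ of $C_R:=\medcap_{i=1}^N B(x_i,R)$. The key geometric fact is that the boundary point of $C_R$ in direction $\theta$ is determined by the $x_i$ that are "active" in that direction, and one shows
\[
  h_{C_R}(\theta) \;=\; R \;-\; \max_{x\in P}\big(R-\sqrt{R^2-\dist(x,\ell_\theta)^2}\,\big)\ \text{-type expression},
\]
more precisely that $h_{C_R}(\theta) = \min_i\big(\langle x_i,\theta\rangle + \sqrt{R^2 - |x_i - \langle x_i,\theta\rangle\theta|^2}\big)$ when $R$ is large enough that all the balls genuinely contribute; expanding the square root, $\sqrt{R^2-a^2}=R-\frac{a^2}{2R}+O(1/R^3)$, gives $h_{C_R}(\theta)=R - \frac{1}{2R}\,g(\theta)+O(1/R^3)$ where $g(\theta)=\max_i |x_i-\langle x_i,\theta\rangle\theta|^2 - (\text{correction from the min of linear terms})$. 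The cleaner bookkeeping is: $h_{C_R}(\theta) = R - \big(R - h_{\frac1R\text{-rescaled Wulff body}}\big)$; I would instead just track that $C_R = R B - D_R$ in the sense that $h_{C_R}(\theta) = R - \varphi_R(\theta)$ with $\varphi_R(\theta)\to 0$ uniformly and $R\,\varphi_R(\theta)\to \psi(\theta)$ for an explicit $\psi$. Then apply the identity $V_n(C_R)=\frac{1}{n}\int_{S^{n-1}} h_{C_R}(\theta)\,\big(\text{surface area measure}\big)$ — or more robustly, note $C_R\subseteq B(c,R)$ and use the expansion of $V_n$ of a body "close to a ball of radius $R$": writing $C_R$ as a perturbation $\{x : \langle x,\theta\rangle \le R-\varphi_R(\theta)\ \forall\theta\}$, one has $V_n(C_R) = \omega_n R^n - R^{n-1}\int_{S^{n-1}}\varphi_R(\theta)\,d\mathcal H^{n-1}(\theta) + O(R^{n-2})$ by a first-variation/Minkowski-content argument, and since $R\,\varphi_R \to \psi$ and $\frac{1}{n\omega_n}\int_{S^{n-1}}\psi\,d\mathcal H^{n-1}$ turns out to equal $w(P)$, the claimed formula drops out.

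The cleanest way to actually identify the coefficient of $R^{n-1}$ with $n\omega_n w(P)$, rather than computing $\psi$ explicitly, is to argue by monotonicity and a known special case: the map $P\mapsto (\text{coefficient of }R^{n-1}\text{ in }V_n(\medcap_{x\in P}B(x,R)))$, once it is shown to exist and to be a valuation-like, translation-invariant, continuous, $1$-homogeneous functional of $P$ that is invariant under rigid motions, must by Hadwiger-type considerations be a constant multiple of the mean width; the constant is pinned down by taking $P$ a single point (where the intersection is exactly $B(x_1,R)$, $V_n = \omega_n R^n$, so the $R^{n-1}$ term vanishes — consistent since $w(\{\text{pt}\})=0$) and then a segment or a ball, where both sides can be computed directly. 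I expect the main obstacle to be making the expansion $h_{C_R}(\theta)=R-\frac{1}{2R}\psi(\theta)+O(R^{-3})$ \emph{uniform in $\theta$ and valid for all large $R$ simultaneously}, i.e. controlling the transition regions of the outer minimum over $i$ where two balls tie, and checking that $\dist$-to-line quantities stay bounded so the square-root expansion is legitimate — together with the degenerate case where $\{x_1,\ldots,x_N\}$ lies in a proper affine subspace, where $P$ has empty interior and $A(P,R)$ is not defined, so one must instead note $w(P)$ is still well-defined and approximate $P$ by full-dimensional polytopes (or simply observe the formula is continuous under Hausdorff perturbation of the $x_i$, by the very expansion being proved, and pass to the limit).
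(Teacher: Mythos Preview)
The paper does \emph{not} prove this statement: Theorem~\ref{thm:G} is quoted verbatim as ``a theorem of Gorbovickis \cite[Theorem~4]{Gorbovickis}'' and is used as a black box in \S\ref{sub:connection}. There is therefore no ``paper's own proof'' to compare your proposal against.

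That said, a few comments on your sketch. Your displayed formula
\[
  h_{C_R}(\theta)=\min_i\Bigl(\langle x_i,\theta\rangle+\sqrt{R^2-\lvert x_i-\langle x_i,\theta\rangle\theta\rvert^2}\Bigr)
\]
is not the support function of $C_R=\medcap_i B(x_i,R)$: the right-hand side is the \emph{radial} function $\rho_{C_R}(\theta)$ (the far endpoint of the chord $\R\theta\cap B(x_i,R)$, minimized over $i$), and the support function of an intersection is not the minimum of the individual support functions. Likewise, the ``identity'' $V_n(C_R)=\frac{1}{n}\int_{S^{n-1}}h_{C_R}(\theta)\,(\text{surface area measure})$ is not a valid formula for volume in terms of the support function. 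If you simply replace $h_{C_R}$ by $\rho_{C_R}$ and use the polar volume formula $V_n(C_R)=\frac{1}{n}\int_{S^{n-1}}\rho_{C_R}(\theta)^n\,dS(\theta)$, the expansion $\rho_{C_R}(\theta)=R+\min_i\langle x_i,\theta\rangle+O(1/R)$ goes through cleanly and gives the coefficient $\int_{S^{n-1}}\min_i\langle x_i,\theta\rangle\,dS(\theta)=-\int_{S^{n-1}}h_P(\theta)\,dS(\theta)$, which is the mean-width term you want (modulo checking Gorbovickis's normalization of $w$). Your alternative ``Hadwiger-type'' route---showing the $R^{n-1}$ coefficient is a rigid-motion-invariant, continuous, $1$-homogeneous functional of $P$---would require first proving that this coefficient exists and is additive/valuation-like, which is essentially the content of the theorem; it is circular as written. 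The degenerate case (points in a proper affine subspace) is handled by your continuity remark, and the uniformity in $\theta$ of the $O(1/R)$ error is automatic since the $x_i$ lie in a fixed compact set.
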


Assume that $K$ is a convex body in $\R^n$ with $\vol{K}=\vol{B}$.
Sample independent random vectors $X_1,\ldots,X_N$ in $K$ and
$Z_1,\ldots,Z_N$ in $B$ according to their respective uniform
probability measures.  For each fixed value of $X_1,\ldots,X_N$,
Theorem \ref{thm:G} implies
\begin{equation}
n\omega_n w(\conv\{X_1,\ldots,X_N\}) = R -
R^{-(n-1)}\vol{\left(\medcap_{i=1}^N B(X_i,R)\right)} +o(1),
\end{equation} as $R\rightarrow \infty$.  By compactness of $K$,
we can use dominated convergence to conclude
\begin{equation*}
  n\omega_n\EE w(\conv\{X_1,\ldots,X_N\}) = R -
  R^{-(n-1)}\EE\vol{\left(\medcap_{i=1}^N B(X_i,R)\right)}+ \EE o(1),
\end{equation*}
as $R\rightarrow \infty$.  By continuity of the volume of the
intersection and the mean width, the quantity $\EE o(1)$ is also of
the form $o(1)$.  The same argument applies to $Z_1,\ldots,Z_N$. By
Theorem \ref{thm:main}, we get
\begin{equation*}
  \EE w(\conv\{X_1,\ldots,X_N\}) \gr \EE w(\conv\{Z_1,\ldots,Z_N\}),
\end{equation*} 
which is equivalent to the  $j=1$ case in \eqref{eqn:random_iso_avg}.
\subsection*{Acknowledgements}

It is our pleasure to thank Rolf Schneider for helpful correspondence.
We also thank Beatrice-Helen Vritsiou for helpful comments on a
previous draft of this paper.  Lastly, we thank the anonymous referee
for careful reading and comments which improved the results and
presentation of the paper.

\bibliographystyle{amsplain} \bibliography{ballpolybib}

\end{document}